\theoremstyle{plain} 
\newtheorem{thm}{Theorem} 
\newtheorem{lem}[thm]{Lemma} 
\newtheorem{cor}[thm]{Corollary}
\theoremstyle{definition} 
\newtheorem*{defn}{Definition}
\DeclareMathOperator{\mre}{Re}
\begin{document} 
\title[Sharp norm estimates for composition operators]{Sharp norm estimates for composition operators and Hilbert-type inequalities}
\date{\today} 

\author{Ole Fredrik Brevig} 
\address{Department of Mathematical Sciences, Norwegian University of Science and Technology (NTNU), NO-7491 Trondheim, Norway} 
\email{ole.brevig@math.ntnu.no}

\thanks{The author is supported by Grant 227768 of the Research Council of Norway.}

%\subjclass[2010]{Primary 47B33. Secondary 26D15, 30B50.}

% ABSTRACT
\begin{abstract}
	Let $\mathscr{H}^2$ denote the Hardy space of Dirichlet series $f(s) = \sum_{n\geq1} a_n n^{-s}$ with square summable coefficients and suppose that $\varphi$ is a symbol generating a composition operator on $\mathscr{H}^2$ by $\mathscr{C}_\varphi(f) = f \circ \varphi$. Let $\zeta$ denote the Riemann zeta function and $\alpha_0=1.48\ldots$ the unique positive solution of the equation $\alpha\zeta(1+\alpha)=2$. We obtain sharp upper bounds for the norm of $\mathscr{C}_\varphi$ on $\mathscr{H}^2$ when $0<\mre\varphi(+\infty)-1/2 \leq \alpha_0$, by relating such sharp upper bounds to the best constant in a family of discrete Hilbert-type inequalities.
\end{abstract}

\maketitle

% INTRODUCTION
\section{Introduction} \label{sec:intro}
Let $0<\alpha<\infty$. The main object of study in the present paper is the family of discrete bilinear forms
\begin{equation} \label{eq:bform}
	B_\alpha(a,b) := \sum_{m=1}^\infty \sum_{n=1}^\infty a_m b_n \frac{(mn)^{\alpha-1/2}}{[\max(m,n)]^{2\alpha}}.
\end{equation}
Let $\|B_\alpha\|$ denote the norm of the bilinear form \eqref{eq:bform} on $\ell^2\times\ell^2$, that is the smallest positive number $C_\alpha$ such that
\[|B_\alpha(a,b)| \leq C_\alpha \|a\|_{\ell^2} \|b\|_{\ell^2}\]
holds for every pair of sequences $a,b\in\ell^2$. Our interest in $\|B_\alpha\|$ stems from its connection to sharp norm estimates for composition operators. For the moment, we postpone the discussion of this connection and focus on \eqref{eq:bform}. Let
\[\zeta(s) := \sum_{n=1}^\infty n^{-s}\]
denote the Riemann zeta function and let $\alpha_0=1.48\ldots$ denote the unique positive solution of the equation $\alpha\zeta(1+\alpha)=2$. We have been unable to compute $\|B_\alpha\|$ for every $0<\alpha<\infty$, but we can prove the following result.

\begin{thm} \label{thm:Balpha}
	For $0<\alpha<\infty$,
	\[\max\left(\frac{2}{\alpha}\,,\,\zeta(1+2\alpha)\right) \leq \|B_\alpha\| \leq \max\left(\frac{2}{\alpha}\,,\,\zeta(1+\alpha)\right).\]
	In particular, if $0<\alpha \leq \alpha_0 = 1.48\ldots$, then $\|B_\alpha\|=2/\alpha$.
\end{thm}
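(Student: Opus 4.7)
The plan is to apply Schur's test with the weights $w_n=n^{-1/2}$ for the upper bound, and to exhibit explicit near-extremal sequences for the two lower bounds. Writing $K(m,n)=(mn)^{\alpha-1/2}\max(m,n)^{-2\alpha}$ and setting
\[
g(n) := n^{1/2}\sum_{m=1}^{\infty}K(m,n)m^{-1/2} = n^{-\alpha}\sum_{m=1}^{n}m^{\alpha-1} + n^{\alpha}\sum_{m=n+1}^{\infty}m^{-\alpha-1},
\]
Schur's lemma reduces the upper bound to proving $\sup_{n\geq 1}g(n)\leq\max(2/\alpha,\zeta(1+\alpha))$. Direct evaluation gives the endpoint values $g(1)=\zeta(1+\alpha)$ and $\lim_{n\to\infty}g(n)=2/\alpha$ (the latter by integral comparison), so the target is to show that $g$ does not overshoot the larger of these two endpoints. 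I expect this estimate to be the technical heart of the proof: rewriting
\[
g(n+1)-g(n) = \bigl((n+1)^{\alpha}-n^{\alpha}\bigr)\Bigl[\sum_{m>n}m^{-\alpha-1} - \bigl(n(n+1)\bigr)^{-\alpha}\sum_{m=1}^{n}m^{\alpha-1}\Bigr]
\]
reduces it to a sign analysis of the bracketed expression, in which partial sums and tails balance in a manner whose critical transition occurs precisely at $\alpha=\alpha_0$.

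The lower bound $\|B_\alpha\|\geq\zeta(1+2\alpha)$ will be proved by testing against $a_n=b_n=n^{-\alpha-1/2}$, which has $\|a\|_{\ell^2}^{2}=\zeta(1+2\alpha)$. Combining the identity $(mn)^{\alpha-1/2}\max(m,n)^{-2\alpha}(mn)^{-\alpha-1/2}=\min(m,n)^{-1}\max(m,n)^{-2\alpha-1}$ with the trivial observation that $\min(m,n)^{2\alpha}\geq 1$ for all positive integers $m,n$ yields
\[
B_\alpha(a,a) = \sum_{m,n\geq 1}\frac{1}{\min(m,n)\max(m,n)^{2\alpha}} \geq \sum_{m,n\geq 1}\frac{1}{(mn)^{2\alpha+1}} = \zeta(1+2\alpha)^{2},
\]
and dividing by $\|a\|_{\ell^2}^{2}$ gives the desired bound. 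For $\|B_\alpha\|\geq 2/\alpha$, the classical Hilbert-type argument works: test against the truncated sequences $a_n=b_n=n^{-1/2}\mathbf{1}_{\{n\leq N\}}$, and verify by Riemann-sum comparison that $B_\alpha(a,a)\sim(2/\alpha)\log N$ while $\|a\|_{\ell^2}^{2}\sim\log N$, so that the quotient tends to $2/\alpha$ as $N\to\infty$.

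Finally, the ``in particular'' statement is immediate. The hypothesis $\alpha\leq\alpha_0$ is by definition equivalent to $\alpha\zeta(1+\alpha)\leq 2$, so the upper bound collapses to $2/\alpha$. Since $\zeta$ is decreasing, $\alpha\zeta(1+2\alpha)\leq\alpha\zeta(1+\alpha)\leq 2$ shows that the lower bound also coincides with $2/\alpha$, yielding $\|B_\alpha\|=2/\alpha$.
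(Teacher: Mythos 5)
Your reduction of the upper bound to $\sup_{n}g(n)\leq\max(2/\alpha,\zeta(1+\alpha))$ via the Schur test with weight $n^{-1/2}$ is exactly the paper's starting point, and the endpoint evaluations $g(1)=\zeta(1+\alpha)$ and $g(n)\to 2/\alpha$ are correct. But you then stop: the assertion that $g$ ``does not overshoot the larger of these two endpoints'' \emph{is} the upper bound, and you explicitly defer it (``I expect this estimate to be the technical heart of the proof''). That is a genuine gap, and it is the hard part of the theorem --- the paper spends Sections~\ref{sec:riemann} and~\ref{sec:lemma} on it. For $0<\alpha\leq 1$ both Riemann sums defining $g(n)$ increase in $n$, so $\sup g=\lim g=2/\alpha$ and the claim is easy; but the ``in particular'' conclusion needs the bound up to $\alpha_0=1.48\ldots>1$, and for $\alpha>1$ the first sum decreases while the second increases, so $g$ need not be monotone. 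The paper handles $1\leq\alpha\leq 3$ by Euler--Maclaurin expansions with explicit sign control of the remainder terms (Lemmas~\ref{lem:symmetry}--\ref{lem:auxeq}) and $\alpha\geq3$ by crude bounds. Your sketched alternative --- showing the bracket $\sum_{m>n}m^{-\alpha-1}-(n(n+1))^{-\alpha}\sum_{m\leq n}m^{\alpha-1}$ changes sign only once --- is correctly derived as an identity, but it is delicate and unexecuted: the two leading asymptotic terms of the bracket cancel, and the surviving terms are of order $n^{-2\alpha}$ and $n^{-\alpha-2}$ with $\alpha$-dependent signs (for large $n$ the bracket is positive when $1<\alpha<2$ but negative when $\alpha>2$), so a uniform ``decrease then increase'' picture is false for general $\alpha$. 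Also, the relevant phase change in the behaviour of $g$ occurs at $\alpha=1$, not at $\alpha_0$, which is merely where $2/\alpha$ and $\zeta(1+\alpha)$ cross.

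The lower bounds are fine. The truncation argument for $\|B_\alpha\|\geq 2/\alpha$ is the standard Hilbert-inequality computation, equivalent to the paper's test sequence $n^{-1/2-\varepsilon}$. Your derivation of $\|B_\alpha\|\geq\zeta(1+2\alpha)$ by testing with $a_n=n^{-\alpha-1/2}$ and using $\min(m,n)^{2\alpha}\geq1$ is correct (up to a typo: the middle sum should carry $\max(m,n)^{2\alpha+1}$ in the denominator) and is a pleasant, self-contained alternative to the paper's route, which obtains this bound indirectly from the point-evaluation estimate \eqref{eq:peval} together with Theorem~\ref{thm:bform}; your test sequence is of course the coefficient sequence of the relevant reproducing kernel, so the two arguments are cousins, but yours stays entirely inside the bilinear form. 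The deduction of the ``in particular'' statement from the two displayed bounds is also correct. In sum: the lower bounds and the final deduction stand, but the upper bound for $1<\alpha\leq\alpha_0$ (and beyond) is not proved.
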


See Figure~\ref{fig:plots} for a graph of the functions $2/\alpha$, $\zeta(1+\alpha)$ and $\zeta(1+2\alpha)$ on the interval $[1,2]$, where both intersections occur. Theorem~\ref{thm:Balpha} is presented as a combination of several results. The upper bound comes from Lemma~\ref{lem:Sasup}. The first part of the lower bound is Lemma~\ref{lem:2alower}, while the second is obtained by the point estimate \eqref{eq:peval} and Theorem~\ref{thm:bform}. 

Our approach to the bilinear form \eqref{eq:bform} and to Theorem~\ref{thm:Balpha} is classical. We will exploit that the positive and symmetric kernels
\begin{equation} \label{eq:Ka}
	K_\alpha(x,y) = \frac{(xy)^{\alpha-1/2}}{[\max(x,y)]^{2\alpha}}
\end{equation}
enjoy the homogeneity property $K_\alpha(\lambda x,\lambda y) = \lambda^{-1}K_\alpha(x,y)$ for $x,y,\lambda>0$. Hence \eqref{eq:bform} is a Hilbert--type bilinear form as studied by Hardy, Littlewood and P\'olya \cite[Ch.~IX]{HLP}. In fact, $\alpha=1/2$ in Theorem~\ref{thm:Balpha} is \cite[Thm.~341]{HLP}. Note also that the result in \cite{Brevig17} covers the case $\alpha=1$.

We first investigate the continuous version of the discrete bilinear form \eqref{eq:bform}. As expected, it is easy to show that the norm of the continuous bilinear form is $2/\alpha$ for every $0<\alpha<\infty$ (see Theorem~\ref{thm:Ha}). Inspired by \cite{HLP}, we aim to use discretization to obtain a sharp result for \eqref{eq:bform}. This approach is successful when $0<\alpha\leq1$. In fact, the upper bound in Theorem~\ref{thm:Balpha} can be deduced directly from \cite[Thm.~318]{HLP} in this range.

A phase change occurs at $\alpha=1$, and the discretization argument gives here an upper bound which (from its application) clearly cannot be sharp. The main point of Theorem~\ref{thm:Balpha} is therefore that the upper bound obtained by discretization for $0<\alpha\leq1$ extends beyond the phase change at $\alpha=1$ to (at least) $\alpha_0=1.48\ldots$. Note that the upper bound $2/\alpha$ cannot hold when $\alpha\geq2$, since it would contradict that $\|B_\alpha\|>1$. In fact, we have verified that the upper bound $2/\alpha$ fails when $\alpha\geq1.7$ (see Section~\ref{sec:remarks}).

% This figure can be changed to greyscale if necessary.

\begin{figure}[t]
	\begin{tikzpicture}[scale=1.15]
		\draw (0,0) -- coordinate (x axis mid) (10,0);
		\draw (0,0) -- coordinate (y axis mid) (0,5);
    	%\draw (0,2pt) -- (0,-2pt) node[anchor=north] {1};
    	\draw (2,2pt) -- (2,-2pt) node[anchor=north] {1.2};
    	\draw (4,2pt) -- (4,-2pt) node[anchor=north] {1.4};
    	\draw (6,2pt) -- (6,-2pt) node[anchor=north] {1.6};
		\draw (8,2pt) -- (8,-2pt) node[anchor=north] {1.8};
		%\draw (10,2pt) -- (10,-2pt) node[anchor=north] {2};
    	%\draw (2pt,0) -- (-2pt,0) node[anchor=east] {1};
    	\draw (2pt,1) -- (-2pt,1) node[anchor=east] {1.2};
    	\draw (2pt,2) -- (-2pt,2) node[anchor=east] {1.4};
    	\draw (2pt,3) -- (-2pt,3) node[anchor=east] {1.6};
		\draw (2pt,4) -- (-2pt,4) node[anchor=east] {1.8};
		%\draw (2pt,5) -- (-2pt,5) node[anchor=east] {2};
		\draw[thick,smooth,samples=100,domain=0:10,color=black] plot(\x,{5*(2/(1+\x/10)-1)}) ; 
		\draw[thick,smooth,samples=100,domain=0:10,color=blue] plot(\x,{5*(2^(-(1+\x/10)-1)+3^(-(1+\x/10)-1)+4^(-(1+\x/10)-1)+5^(-(1+\x/10)-1)+6^(-(1+\x/10)-1)+7^(-(1+\x/10)-1)+8^(-(1+\x/10)-1)+9^(-(1+\x/10)-1)+9.5^(-(1+\x/10))/(1+\x/10))}) ;
		\draw[thick,smooth,samples=100,domain=0:10,color=red] plot(\x,{5*(2^(-2*(1+\x/10)-1)+3^(-2*(1+\x/10)-1)+4^(-2*(1+\x/10)-1)+5^(-2*(1+\x/10)-1)+6^(-2*(1+\x/10)-1)+7^(-2*(1+\x/10)-1)+8^(-2*(1+\x/10)-1)+9^(-2*(1+\x/10)-1)+9.5^(-2*(1+\x/10))/(2*(1+\x/10)))}) ;
	\end{tikzpicture}
	\label{fig:plots}
	\caption{{\color{black}$2/\alpha$}, {\color{blue}$\zeta(1+\alpha)$} and {\color{red}$\zeta(1+2\alpha)$} for $1\leq \alpha \leq 2$.}
\end{figure}
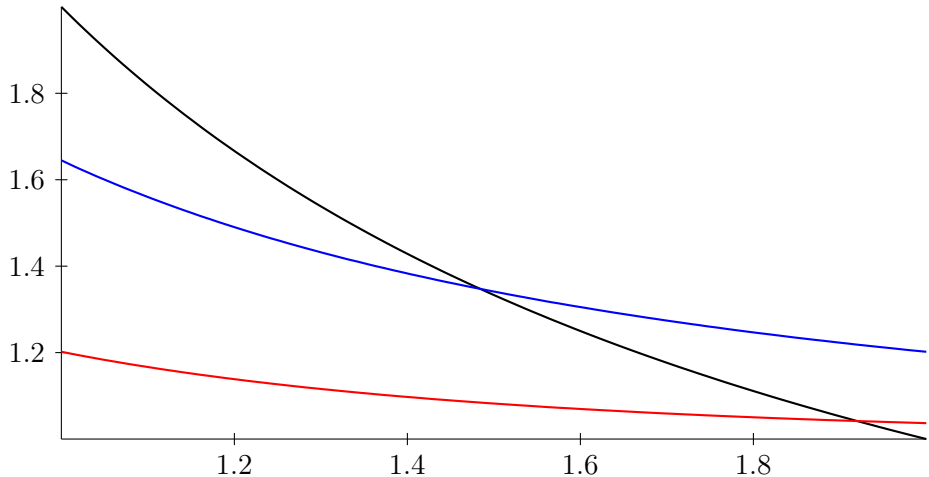

% This figure can be changed to greyscale if necessary.

To set the stage for the discussion of the relationship between $B_\alpha$ and composition operators, let $H^2(\mathbb{D})$ denote the Hardy space of the unit disc $\mathbb{D}:=\left\{z\,:\,|z|<1\right\}$, consisting of analytic functions $F(z) = \sum_{k\geq0} a_k z^k$ with square summable coefficients, 
\[\|F\|_{H^2(\mathbb{D})} := \left(\sum_{k=0}^\infty |a_k|^2 \right)^{1/2} = \lim_{r\to 1^-}\left(\int_0^{2\pi} |F(re^{i\theta})|^2\,\frac{d\theta}{2\pi}\right)^\frac{1}{2}.\]
For the following basic facts about Hardy spaces of the unit disc and their composition operators, we refer to \cite[Ch.~11]{Zhu}. Suppose that $\phi\colon\mathbb{D}\to\mathbb{D}$ is analytic. Littlewood's subordination principle \cite{Littlewood25} gives the following upper bound for the norm of the composition operator defined by $\mathscr{C}_\phi(F) = F\circ\phi$:
\begin{equation} \label{eq:upperbound}
	\|\mathscr{C}_\phi\|_{H^2(\mathbb{D})\to H^2(\mathbb{D})} \leq \sqrt{\frac{1+|\phi(0)|}{1-|\phi(0)|}}.
\end{equation}
From the functional of point evaluation, we obtain the following lower bound: 
\begin{equation} \label{eq:lowerbound}
	\|\mathscr{C}_\phi\|_{H^2(\mathbb{D})\to H^2(\mathbb{D})}\geq \sqrt{\frac{1}{1-|\phi(0)|^2}}.
\end{equation}
Both \eqref{eq:upperbound} and \eqref{eq:lowerbound} are sharp for any value $\phi(0)=w\in\mathbb{D}$. Indeed, for the lower bound, take $\phi(z)=w$, and for the upper bound, take the M\"obius transform
\[\phi(z) = \frac{w-z}{1-\overline{w}z}.\]
In general, the computation of $\|\mathscr{C}_\phi\|_{H^2(\mathbb{D})\to H^2(\mathbb{D})}$ is difficult, but it follows from \eqref{eq:upperbound} and \eqref{eq:lowerbound} that the composition operator $\mathscr{C}_\phi$ is a contraction on $H^2(\mathbb{D})$ if and only if $\phi(0)=0$.

We will use Theorem~\ref{thm:Balpha} to obtain sharp norm estimates for composition operators on $\mathscr{H}^2$, the Hardy space of Dirichlet series $f(s) = \sum_{n\geq1}a_n n^{-s}$ with square summable coefficients, 
\[\|f\|_{\mathscr{H}^2} := \left(\sum_{n=1}^\infty |a_n|^2\right)^{1/2}.\]
The basic properties of $\mathscr{H}^2$ can be found in \cite{HLS97,QQ13}. Using the Cauchy--Schwarz inequality, it is easy to see that $\mathscr{H}^2$ is a space of absolutely convergent Dirichlet series in the half-plane $\mathbb{C}_{1/2}$, where
\[\mathbb{C}_\theta := \left\{s \,:\, \mre(s)>\theta \right\}.\]
To see that $\mathbb{C}_{1/2}$ is generally the largest domain of convergence for functions in $\mathscr{H}^2$, consider $f(s) = \zeta(1/2+\varepsilon+s)$ for $\varepsilon>0$.

The study of composition operators on $\mathscr{H}^2$ was initiated by Gordon and Hedenmalm in their pioneering paper \cite{GH99} (see also~\cite{Queffelec15,QS15}), where they proved that an analytic function $\varphi\colon\mathbb{C}_{1/2}\to\mathbb{C}_{1/2}$ generates a composition operator on $\mathscr{H}^2$ if and only if it is a member of the following class.

\begin{defn}
	The \emph{Gordon--Hedenmalm} class, $\mathscr{G}$, consists of symbols of the form
	\[\varphi(s) = c_0 s + \sum_{n=1}^\infty c_n n^{-s} =: c_0 s + \varphi_0(s),\]
	where $c_0$ is a non-negative integer. The series $\varphi_0$ converges uniformly in $\mathbb{C}_\varepsilon$ for every $\varepsilon>0$ and satisfies the following mapping properties:
	\begin{itemize}
		\item[(a)] If $c_0=0$, then $\varphi_0(\mathbb{C}_0)\subset\mathbb{C}_{1/2}$.
		\item[(b)] If $c_0\geq1$ then $\varphi_0 \equiv 0 $ or $\varphi_0(\mathbb{C}_0)\subset\mathbb{C}_0$.
	\end{itemize}
\end{defn}

Observe that the case (b), which is when $\varphi(+\infty)=+\infty$, corresponds to $\phi(0)=0$ considered above. Indeed, it was shown in \cite{GH99} that composition operators generated by symbols with $c_0\geq1$ are contractive. Considering the case (a), the functional of point evaluation for $\mathscr{H}^2$ (see~\cite{GH99,HLS97}) gives that
\begin{equation} \label{eq:peval}
	\|\mathscr{C}_{\varphi}\| \geq \sqrt{\zeta(2\mre(c_1))},
\end{equation}
since $c_1 = \varphi(+\infty)$. The lower bound \eqref{eq:peval} is sharp for $\varphi(s)=c_1$. Our initial motivation for studying $B_\alpha$ was to investigate sharp upper bounds analogous to \eqref{eq:upperbound} for composition operators on $\mathscr{H}^2$ for the case (a), something which has been unresolved since \cite{GH99}. Here is the main result of the present paper. 
\begin{thm} \label{thm:bform}
	Fix $w \in \mathbb{C}_{1/2}$ and let $\alpha=\mre(w)-1/2$. Then
	\[\sup_{\substack{\varphi \in \mathscr{G} \\ \varphi(+\infty)=w}} \|\mathscr{C}_\varphi\| = \sqrt{\|B_\alpha\|}.\]
\end{thm}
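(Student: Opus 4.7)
To prove $\sup_\varphi \|\mathscr{C}_\varphi\| = \sqrt{\|B_\alpha\|}$, establish the upper and lower bounds separately. For the upper bound, use the Bohr lift identifying $\mathscr{H}^2$ with $H^2(\mathbb{T}^\infty)$. If $\Phi \colon \mathbb{D}^\infty \to \mathbb{C}_{1/2}$ denotes the Bohr lift of $\varphi$, then for $f = \sum a_n n^{-s} \in \mathscr{H}^2$,
\[
\|\mathscr{C}_\varphi f\|_{\mathscr{H}^2}^2 = \int_{\mathbb{T}^\infty} |f \circ \Phi|^2 \, d\mu = \sum_{m,n \geq 1} a_m \overline{a_n} \langle m^{-\varphi}, n^{-\varphi} \rangle_{\mathscr{H}^2}.
\]
Writing $\varphi = w + \psi$ with $\psi(+\infty) = 0$ and $\mre \psi > -\alpha$, the entries factor as $\langle m^{-\varphi}, n^{-\varphi} \rangle = m^{-w} n^{-\overline{w}} I_{m,n}$ where $I_{m,n} = \int m^{-\psi} \overline{n^{-\psi}} \, d\mu$. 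The goal is the bilinear bound $\sum a_m \overline{a_n} \langle m^{-\varphi}, n^{-\varphi} \rangle \leq \|B_\alpha\| \|a\|_{\ell^2}^2$, which would follow from a Loewner inequality $M_\varphi \preceq K_\alpha$ between Gram matrices (where $K_\alpha(m,n) = (mn)^{\alpha-1/2}/\max(m,n)^{2\alpha}$ is the kernel of $B_\alpha$). A pointwise comparison $|\langle m^{-\varphi}, n^{-\varphi} \rangle| \leq K_\alpha(m,n)$ is tempting but fails in general: for $\psi(s) = \alpha \cdot 2^{-s}$ one computes $I_{m,n} = I_0(2\alpha \sqrt{\log m \log n})$, which exceeds $\min(m,n)^{2\alpha}$ already at $\alpha = 1$, $m = 2$, $n = 100$.

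A natural approach to the Loewner inequality is to compare the Gram decomposition $M_\varphi = \int \psi_z \psi_z^* \, d\mu(z)$ with $\psi_z(m) = m^{-\Phi(z)}$ to the integral representation
\[
K_\alpha(m,n) = 2\alpha \int_0^\infty m^{\alpha-1/2} \mathbf{1}_{t \geq m} \cdot n^{\alpha-1/2} \mathbf{1}_{t \geq n} \cdot t^{-2\alpha-1} \, dt,
\]
exploiting both the analyticity of $\psi$ and the half-plane condition $\mre \psi > -\alpha$. For the lower bound, construct symbols $\varphi_N \in \mathscr{G}$ with $\varphi_N(+\infty) = w$ and $\|\mathscr{C}_{\varphi_N}\|^2 \to \|B_\alpha\|$. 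Given $\varepsilon > 0$, fix a non-negative $a \in \ell^2$ supported on $\{1, \ldots, N\}$ realizing $\sum a_m a_n K_\alpha(m,n)/\|a\|^2 \geq \|B_\alpha\| - \varepsilon$. Then design a finite-dimensional Bohr lift $\Phi_N \colon \mathbb{D}^N \to \mathbb{C}_{1/2}$ on the first $N$ primes with $\Phi_N(0) = w$, so that $\langle m^{-\varphi_N}, n^{-\varphi_N} \rangle$ approximates $K_\alpha(m,n)$ for $1 \leq m, n \leq N$; a natural device uses conformal maps of each disc factor onto a tall vertical strip in $\mathbb{C}_{1/2}$, rescaled so that the pushforward measure concentrates near the boundary $\mre s = 1/2$.

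The main obstacle is the upper bound: pointwise bounds on matrix entries fail, so one must establish the Loewner inequality $M_\varphi \preceq K_\alpha$ uniformly over admissible $\varphi$ by non-pointwise means, carefully balancing the analyticity of $\psi$ against the positivity of $\mre\psi + \alpha$ through the Gram comparison above or a related Carleson-type embedding.
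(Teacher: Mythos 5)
Your reduction of $\|\mathscr{C}_\varphi f\|_{\mathscr{H}^2}^2$ to a quadratic form with Gram matrix $M_\varphi(m,n)=\langle m^{-\varphi},n^{-\varphi}\rangle_{\mathscr{H}^2}$ is sound, and your observation that the entrywise bound $|M_\varphi(m,n)|\leq K_\alpha(m,n)$ fails is a genuine and correct insight. But the argument then stops exactly where the theorem lives: you state that one ``must establish the Loewner inequality $M_\varphi\preceq K_\alpha$ by non-pointwise means'' without establishing it, and the Gram-decomposition/Carleson-type comparison you gesture at is not carried out. This is not a technicality --- the entire upper bound is missing. The missing idea is that $K_\alpha$ is itself a Gram matrix: from the computation $\frac{\alpha}{\pi}\int_{-\infty}^{\infty}x^{it}\,\frac{dt}{\alpha^2+t^2}=[\max(x,1/x)]^{-\alpha}$ one gets $K_\alpha(m,n)=\langle m^{-s},n^{-s}\rangle_{H^2_{\operatorname{i}}(\mathbb{C}_{1/2},\,\alpha)}$, where $H^2_{\operatorname{i}}(\mathbb{C}_{1/2},\alpha)$ is the conformally invariant Hardy space of $\mathbb{C}_{1/2}$ normalized so that $\mathcal{S}_{1/2}\circ\mathcal{T}_\alpha$ is an isometry from $H^2(\mathbb{D})$. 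Hence $M_\varphi\preceq K_\alpha$ is exactly the embedding $\|f\circ\varphi\|_{\mathscr{H}^2}\leq\|f\|_{H^2_{\operatorname{i}}(\mathbb{C}_{1/2},\,\alpha)}$, and this follows from Littlewood's subordination principle \eqref{eq:upperbound}: for a Dirichlet polynomial $f$ write $\|f\circ\varphi\|_{\mathscr{H}^2}=\lim_{\beta\to\infty}\|f\circ\varphi\circ\mathcal{T}_\beta\|_{H^2(\mathbb{D})}$, factor $f\circ\varphi\circ\mathcal{T}_\beta=F\circ\phi_\beta$ with $F=f\circ\mathcal{S}_{1/2}\circ\mathcal{T}_\alpha$ and $\phi_\beta=\mathcal{T}_\alpha^{-1}\circ\mathcal{S}_{1/2}^{-1}\circ\varphi\circ\mathcal{T}_\beta\colon\mathbb{D}\to\mathbb{D}$, and observe that $\phi_\beta(0)=\mathcal{T}_\alpha^{-1}(\varphi(\beta)-1/2)\to\mathcal{T}_\alpha^{-1}(\alpha)=0$ precisely because $\alpha=\mre(w)-1/2$ (one may translate $w$ to $\mre(w)$ by vertical translation invariance). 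The Littlewood constant $\sqrt{(1+|\phi_\beta(0)|)/(1-|\phi_\beta(0)|)}$ then tends to $1$. So the ``non-pointwise means'' you are looking for is subordination in the disc after a conformal change of variables and a limit in $\beta$.

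For the lower bound, your construction with $N$ prime variables and tall vertical strips is both incomplete and unnecessary: a single symbol suffices. Take $\varphi_\alpha(s)=\mathcal{S}_{1/2}\circ\mathcal{T}_\alpha(2^{-s})$, which lies in $\mathscr{G}$ with $\varphi_\alpha(+\infty)=1/2+\alpha$. Since $\mathscr{C}_{\varphi_\alpha}$ maps $\mathscr{H}^2$ into the subspace of Dirichlet series in powers of $2^{-s}$, which is isometric to $H^2(\mathbb{D})$ via $2^{-s}\mapsto z$, one has $\|f\circ\varphi_\alpha\|_{\mathscr{H}^2}=\|f\circ\mathcal{S}_{1/2}\circ\mathcal{T}_\alpha\|_{H^2(\mathbb{D})}=\|f\|_{H^2_{\operatorname{i}}(\mathbb{C}_{1/2},\,\alpha)}$ for every $f$; taking the supremum over $\|f\|_{\mathscr{H}^2}=1$ and using the Gram-matrix identity above gives $\|\mathscr{C}_{\varphi_\alpha}\|=\sqrt{\|B_\alpha\|}$ exactly, so the supremum is in fact attained.
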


The proof of Theorem~\ref{thm:bform} is obtained by combining observations and ideas from \cite{BB16,Brevig17,GH99}. Our desired sharp upper bound for composition operators is easily deduced from Theorem~\ref{thm:Balpha} and Theorem~\ref{thm:bform}.

\begin{cor} \label{cor:Cphileq}
	Let $\alpha_0=1.48\ldots$ denote the unique positive solution to the equation $\alpha\zeta(1+\alpha)=2$. Suppose that $\varphi$ is in $\mathscr{G}$ with $c_0=0$ and that  $0<\mre(c_1)-1/2 \leq \alpha_0$. Then
	\begin{equation} \label{eq:Cphileq}
		\|\mathscr{C}_\varphi\| \leq \sqrt{\frac{2}{\mre(c_1)-1/2}},
	\end{equation}
	Moreover, for every $0<\mre(c_1)-1/2\leq \alpha_0$, there are $\varphi$ in $\mathscr{G}$ attaining \eqref{eq:Cphileq}.
\end{cor}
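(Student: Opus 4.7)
The plan is to chain the two preceding theorems. Given $\varphi \in \mathscr{G}$ with $c_0 = 0$, write $c_1 = \varphi(+\infty)$ and set $\alpha = \mre(c_1) - 1/2$, so that $c_1 \in \mathbb{C}_{1/2}$ plays the role of $w$ in Theorem \ref{thm:bform}. That theorem gives
\[\|\mathscr{C}_\varphi\| \leq \sup_{\substack{\psi \in \mathscr{G}\\ \psi(+\infty)=c_1}} \|\mathscr{C}_\psi\| = \sqrt{\|B_\alpha\|}.\]
The hypothesis $0 < \alpha \leq \alpha_0$ places us exactly in the range in which Theorem \ref{thm:Balpha} asserts $\|B_\alpha\| = 2/\alpha$. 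Substituting proves the upper bound \eqref{eq:Cphileq}.

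For the sharpness statement I would use Theorem \ref{thm:bform} in the opposite direction: the identity $\sup \|\mathscr{C}_\psi\| = \sqrt{2/\alpha}$ shows that for every $\varepsilon > 0$ one can find $\psi \in \mathscr{G}$ with $\psi(+\infty) = c_1$ and $\|\mathscr{C}_\psi\| > \sqrt{2/\alpha} - \varepsilon$. To produce a single $\varphi$ actually achieving equality in \eqref{eq:Cphileq}, I would revisit the proof of Theorem \ref{thm:bform} for its explicit correspondence between symbols in $\mathscr{G}$ and the bilinear form $B_\alpha$, and then transplant to the composition operator setting the construction behind the lower bound $\|B_\alpha\| \geq 2/\alpha$ from Lemma \ref{lem:2alower}.

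The delicate point is this last step. Classical Hilbert-type bilinear norms are typically suprema that are not attained on $\ell^2 \times \ell^2$, so it is not a priori clear that the value $\sqrt{2/\alpha}$ is realized by any concrete $\varphi$. In particular, the point-evaluation extremizer $\varphi(s) = c_1$ produces operator norm $\sqrt{\zeta(1+2\alpha)}$, which lies strictly below $\sqrt{2/\alpha}$ throughout $(0, \alpha_0]$, so a non-trivial symbol must be exhibited. The corollary's attainment claim therefore hinges on the flexibility of the correspondence used in the proof of Theorem \ref{thm:bform}, in all likelihood along the lines of the explicit construction from \cite{Brevig17}, which handles the case $\alpha = 1$.
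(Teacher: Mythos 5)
Your derivation of the upper bound is exactly the paper's: set $\alpha=\mre(c_1)-1/2$, apply Theorem~\ref{thm:bform} to bound $\|\mathscr{C}_\varphi\|$ by $\sqrt{\|B_\alpha\|}$, and invoke Theorem~\ref{thm:Balpha} to get $\|B_\alpha\|=2/\alpha$ on $(0,\alpha_0]$. That half is complete.

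The attainment half is where you leave a gap, and the route you sketch would not close it. Transplanting the near-extremal sequences $a_m=m^{-1/2-\varepsilon}$ from Lemma~\ref{lem:2alower} can only produce, for an already chosen symbol, test functions $f$ with $\|\mathscr{C}_\varphi f\|/\|f\|$ close to $\sqrt{2/\alpha}$; it does not tell you which symbol to take, and --- as you yourself observe --- the norm of $B_\alpha$ need not be attained on $\ell^2\times\ell^2$, so this route only reproduces the $\varepsilon$-approximate statement you started from. The resolution is that no attainment of the bilinear-form norm is needed: the second half of the proof of Lemma~\ref{lem:confnorm} already exhibits the explicit symbol
\[\varphi_\alpha(s)=\frac{1}{2}+\alpha\,\frac{1-2^{-s}}{1+2^{-s}},\]
for which $\|\mathscr{C}_{\varphi_\alpha}f\|_{\mathscr{H}^2}=\|f\|_{H^2_{\operatorname{i}}(\mathbb{C}_{1/2},\,\alpha)}$ holds as an identity for \emph{every} $f\in\mathscr{H}^2$, because $f\circ\varphi_\alpha$ lands in the subspace $\mathscr{X}$ of Dirichlet series in powers of $2^{-s}$, which is isometric to $H^2(\mathbb{D})$. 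Taking the supremum over $\|f\|_{\mathscr{H}^2}=1$ and applying Lemma~\ref{lem:residue} gives $\|\mathscr{C}_{\varphi_\alpha}\|=\sqrt{\|B_\alpha\|}=\sqrt{2/\alpha}$ exactly: two suprema coincide, and neither needs to be attained by any individual $f$ or sequence. The extremal symbol is thus the transference of $\phi(z)=z$, not anything built from extremizers of the Hilbert-type form; and since the corollary only prescribes $\mre(c_1)$, the symbol $\varphi_\alpha$ with $\alpha=\mre(c_1)-1/2$ already satisfies the constraint.
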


The present paper is organized as follows. The proof of Theorem~\ref{thm:bform} is presented in Section~\ref{sec:proof}. The following two sections are devoted to the bilinear form \eqref{eq:bform} and Theorem~\ref{thm:Balpha}. In Section~\ref{sec:riemann} we follow \cite{HLP} and investigate the continuous version of \eqref{eq:bform}. Here we also obtain the lower bound in Theorem~\ref{thm:Balpha} for all $0<\alpha<\infty$ and the upper bound when $0<\alpha\leq1$ and $\alpha\geq3$. Section~\ref{sec:lemma} contains the proof of the upper bound in Theorem~\ref{thm:Balpha} in the most intricate cases $1<\alpha<2$ and $2\leq \alpha < 3$. Finally, Section~\ref{sec:remarks} contains a few remarks pertaining to the relationship between composition operators on $H^2(\mathbb{D})$ and $\mathscr{H}^2$. Also found in Section~\ref{sec:remarks} are some observations regarding Theorem~\ref{thm:Balpha} for $\alpha>\alpha_0$ and two interesting or appealing special cases of \eqref{eq:bform}.

% PROOF OF THEOREM 2
\section{Proof of Theorem~\ref{thm:bform}} \label{sec:proof}
For fixed $0<\alpha<\infty$, the conformal map
\begin{equation} \label{eq:Ta}
	\mathcal{T}_\alpha(z) := \alpha\frac{1-z}{1+z}
\end{equation}
sends $\mathbb{D}$ to $\mathbb{C}_0$. Let $\mathcal{S}_{\theta}(s)=s+\theta$, and define $H^2_{\operatorname{i}}(\mathbb{C}_{\theta},\alpha)$ as the space of analytic functions in $\mathbb{C}_\theta$ such that $f \circ \mathcal{S}_\theta \circ \mathcal{T}_\alpha$ is in $H^2(\mathbb{D})$, and set
\begin{equation} \label{eq:cnorm}
	\|f\|_{H^2_{\operatorname{i}}(\mathbb{C}_\theta,\,\alpha)} := \|f\circ \mathcal{S}_\theta \circ \mathcal{T}_\alpha\|_{H^2(\mathbb{D})} = \left(\frac{\alpha}{\pi}\int_{-\infty}^\infty |f(\theta+it)|^2\,\frac{dt}{\alpha^2+t^2}\right)^{1/2}.
\end{equation}
For fixed $\theta$ and varying $\alpha$, the norms \eqref{eq:cnorm} are equivalent, but not equal. This means that the space $H^2_{\operatorname{i}}(\mathbb{C}_\theta,\,\alpha)$ does not depend on the parameter $\alpha$.

We are now ready to begin with the proof of Theorem~\ref{thm:bform}. Note that the first statement of the following lemma can be found in \cite{GH99}, but we include a short proof for the reader's benefit.

\begin{lem} \label{lem:confnorm}
	Let $\varphi \in \mathscr{G}$ with $c_0=0$ and $\varphi(+\infty)=c_1>1/2$. If $\alpha=c_1-1/2$ and $f \in \mathscr{H}^2$, then
	\begin{equation} \label{eq:confnorm}
		\|\mathscr{C}_\varphi f \|_{\mathscr{H}^2} \leq \|f\|_{H^2_{\operatorname{i}}(\mathbb{C}_{1/2},\,\alpha)}.
	\end{equation}
	Moreover, for every $c_1>1/2$ there are $\varphi\in\mathscr{G}$ with $\varphi(+\infty)=c_1$ such that equality in \eqref{eq:confnorm} is attained simultaneously for all $f \in \mathscr{H}^2$.
\end{lem}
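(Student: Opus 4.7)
My approach is to reformulate the inequality on the infinite polydisc via the Bohr correspondence and then reduce to Littlewood's classical subordination principle. Identify $\mathscr{H}^2$ with $H^2(\mathbb{T}^\infty)$; the Bohr lift of a symbol $\varphi \in \mathscr{G}$ with $c_0 = 0$ is then a holomorphic function $\widetilde{\varphi}:\mathbb{D}^\infty \to \overline{\mathbb{C}_{1/2}}$ admitting boundary values $m$-a.e.\ on $\mathbb{T}^\infty$. The starting point (essentially due to \cite{GH99}) is the identity
\[
\|\mathscr{C}_\varphi f\|_{\mathscr{H}^2}^2 = \int_{\mathbb{T}^\infty} |f(\widetilde{\varphi}(\chi))|^2 \, dm(\chi),
\]
which is obtained first for the regularization $f_r(s) := f(s+r)$ with $r>0$ (where $f_r$ is bounded on $\overline{\mathbb{C}_{1/2}}$ by Cauchy--Schwarz and $\zeta(1+2r)<\infty$) and then extended to general $f \in \mathscr{H}^2$ by letting $r \to 0^+$ and invoking continuity of $\mathscr{C}_\varphi$ together with monotone/dominated convergence.

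Write $T := \mathcal{S}_{1/2} \circ \mathcal{T}_\alpha: \mathbb{D} \to \mathbb{C}_{1/2}$, set $F := f \circ T$ so that $\|F\|_{H^2(\mathbb{D})} = \|f\|_{H^2_{\mathrm{i}}(\mathbb{C}_{1/2},\alpha)}$ by \eqref{eq:cnorm}, and define $\Phi := T^{-1} \circ \widetilde{\varphi}: \mathbb{D}^\infty \to \mathbb{D}$, which is holomorphic. Because $c_1 = 1/2 + \alpha$ is real, $\widetilde{\varphi}(0) = c_1$ and therefore $\Phi(0) = \mathcal{T}_\alpha^{-1}(\alpha) = 0$. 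The inequality \eqref{eq:confnorm} is thus reduced to
\[
\int_{\mathbb{T}^\infty} |F(\Phi(\chi))|^2 \, dm(\chi) \leq \|F\|_{H^2(\mathbb{D})}^2.
\]
To establish this, fix $\zeta \in \mathbb{T}^\infty$ and form the one-variable slice $\phi_\zeta(\lambda) := \Phi(\lambda\zeta)$; this is a holomorphic self-map of $\mathbb{D}$ fixing the origin. Applying classical Littlewood subordination to $\phi_\zeta$ and integrating the resulting inequality in $\zeta$ against $dm$, then exchanging the order of integration and using that the rotation $\zeta \mapsto e^{it}\zeta$ preserves the Haar measure on $\mathbb{T}^\infty$ for every $t$, yields the desired bound.

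For the sharpness statement the candidate symbol is
\[
\varphi_*(s) := \frac{1}{2} + \alpha \cdot \frac{1 - 2^{-s}}{1 + 2^{-s}},
\]
which after geometric-series expansion manifestly belongs to $\mathscr{G}$ with $c_0 = 0$ and satisfies $\varphi_*(+\infty) = c_1$. Its Bohr lift depends only on the variable $\chi_2$ associated with the prime $2$, and the computation $\mathcal{T}_\alpha^{-1}(w) = (\alpha - w)/(\alpha + w)$ gives $\Phi(\chi) = \chi_2$. Since the Haar measure on $\mathbb{T}^\infty$ pushes forward to Lebesgue measure on $\mathbb{T}$ under the coordinate projection $\chi \mapsto \chi_2$, Fubini yields
\[
\int_{\mathbb{T}^\infty} |F(\chi_2)|^2 \, dm(\chi) = \int_{\mathbb{T}} |F(z)|^2 \, dm(z) = \|F\|_{H^2(\mathbb{D})}^2,
\]
so equality in \eqref{eq:confnorm} is attained simultaneously for every $f \in \mathscr{H}^2$. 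The main technical obstacle is the careful justification of the Bohr-lift identity together with the Fatou-type statement required for $\phi_\zeta(e^{it})$ to coincide with the boundary value of $\Phi$ at $e^{it}\zeta$ for almost every $(t,\zeta)$; granted these measure-theoretic points, both the subordination step and the equality computation are short.
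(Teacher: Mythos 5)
Your overall strategy is viable and genuinely different from the paper's, but as written it has a concrete gap at the step where you form the slices $\phi_\zeta(\lambda) := \Phi(\lambda\zeta)$ for $\zeta \in \mathbb{T}^\infty$. The Bohr lift $\Phi = T^{-1}\circ\widetilde{\varphi}$ is in the first instance given by a monomial (power) series, and the domain of monomial convergence for bounded analytic functions of infinitely many variables is essentially $\mathbb{D}^\infty\cap\ell^2$; the points $\lambda\zeta$ with $\zeta\in\mathbb{T}^\infty$ and $0<|\lambda|<1$ have all coordinates of modulus $|\lambda|$ and lie far outside this set. (Concretely, writing $T^{-1}\circ\varphi = \sum_n d_n n^{-s}$, the series $\sum_n |d_n|\,|\lambda|^{\Omega(n)}$ need not converge, since $\sum_n |\lambda|^{2\Omega(n)} = \prod_p (1-|\lambda|^2)^{-1} = \infty$, so Cauchy--Schwarz against $\sum|d_n|^2<\infty$ gives nothing.) Thus $\phi_\zeta$ is not defined by the series, and one needs an extension theorem (Cole--Gamelin, or the fact that bounded analytic functions on $B_{c_0}$ extend to $B_{\ell^\infty}$) merely to make sense of the slices. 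After that, identifying the radial limits $\lim_{r\to1^-}\phi_\zeta(re^{it})$ with the boundary function $\Phi^*$ appearing in your norm identity is a nontrivial Fatou-type theorem on $\mathbb{T}^\infty$ -- note that the boundary function in the Gordon--Hedenmalm identity is produced by the vertical flow $\sigma\mapsto(p_j^{-\sigma}\chi_j)$, not by radial dilation, so the two approach regions must be reconciled. You flag the second issue but not the first; together they are where the real work lies, and neither is routine.

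The paper sidesteps all of this by staying in one complex variable: it uses $\|g\|_{\mathscr{H}^2}=\lim_{\beta\to\infty}\|g\|_{H^2_{\operatorname{i}}(\mathbb{C}_0,\,\beta)}$ for Dirichlet polynomials, writes $f\circ\varphi\circ\mathcal{T}_\beta = F\circ\phi$ with $\phi = \mathcal{T}_\alpha^{-1}\circ\mathcal{S}_{1/2}^{-1}\circ\varphi\circ\mathcal{T}_\beta$ a self-map of $\mathbb{D}$, applies the quantitative Littlewood bound \eqref{eq:upperbound}, and observes that $\phi(0)\to0$ as $\beta\to\infty$ so that the constant tends to $1$; general $f\in\mathscr{H}^2$ is then handled by density. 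The sharpness half of your argument coincides with the paper's (the symbol $\mathcal{S}_{1/2}\circ\mathcal{T}_\alpha(2^{-s})$, whose lift depends only on the variable $\chi_2$), and that part is fine. If you wish to keep the polydisc picture, you must cite the extension and Fatou theorems explicitly; otherwise the one-variable limit argument is both shorter and self-contained.
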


\begin{proof}
	A computation (or \cite[Thm.~2.31]{Queffelec15}) shows that if $g \in \mathscr{H}^2$ converges uniformly in $\mathbb{C}_0$, then
	\[\|g\|_{\mathscr{H}^2} = \lim_{\beta\to \infty} \|g\|_{H^2_{\operatorname{i}}(\mathbb{C}_0,\,\beta)}.\]
	In particular, if $f$ is a Dirichlet polynomial and $\varphi$ is in $\mathscr{G}$ with $c_0=0$, then by \eqref{eq:cnorm} we get that
	\begin{equation} \label{eq:comp1}
		\|\mathscr{C}_\varphi f\|_{\mathscr{H}^2} = \lim_{\beta\to \infty} \|f \circ \varphi \|_{H^2_{\operatorname{i}}(\mathbb{C}_0,\,\beta)} = \lim_{\beta\to \infty} \|f \circ \varphi \circ \mathcal{T}_\beta\|_{H^2(\mathbb{D})}.
	\end{equation}
	Define $F \in H^2(\mathbb{D})$ and $\phi \colon \mathbb{D}\to \mathbb{D}$ by 
	\begin{align*}
		F &:= f \circ \mathcal{S}_{1/2} \circ \mathcal{T}_\alpha \\
		\phi &:= \mathcal{T}_\alpha^{-1} \circ \mathcal{S}_{1/2}^{-1} \circ \varphi \circ \mathcal{T}_\beta
	\end{align*}
	for some $0<\alpha<\infty$ to be decided later. It now follows from \eqref{eq:upperbound} that
	\begin{equation} \label{eq:comp2}
		\|f \circ \varphi \circ \mathcal{T}_\beta\|_{H^2(\mathbb{D})} = \|F \circ \phi \|_{H^2(\mathbb{D})} \leq \sqrt{\frac{1+|\phi(0)|}{1-|\phi(0)|}}\|F\|_{H^2(\mathbb{D})}.
	\end{equation}
	We compute
	\[\lim_{\beta \to \infty} \phi(0) =  \lim_{\beta\to\infty} \mathcal{T}^{-1}_\alpha(\varphi(\beta)-1/2) = \mathcal{T}^{-1}_\alpha(c_1-1/2) = 0,\]
	where the final equality is obtained by choosing $\alpha=c_1-1/2$. Combining \eqref{eq:comp1} and \eqref{eq:comp2} we find that
	\[\|\mathscr{C}_\varphi f\|_{\mathscr{H}^2} \leq \|F\|_{H^2(\mathbb{D})} = \|f\|_{H^2_{\operatorname{i}}(\mathbb{C}_{1/2},\,\alpha)}.\]
	For a general $f \in \mathscr{H}^2$, we prove \eqref{eq:confnorm} by approximating $f$ by a sequence of Dirichlet polynomials. The convergence on the right hand side is then justified by \cite[Thm.~4.11]{HLS97}.
	
	To see that $\varphi$ can be chosen to attain equality in \eqref{eq:confnorm} for every $f \in \mathscr{H}^2$, we follow an observation from \cite{BB16} (which in turn was inspired by the transference principle from \cite{QS15}) and consider the symbol defined by
	\[\varphi_\alpha(s) := \mathcal{S}_{1/2} \circ \mathcal{T}_\alpha(2^{-s}) = \frac{1}{2} + \alpha\frac{1-2^{-s}}{1+2^{-s}}.\]
	Clearly, $c_1 = 1/2 + \alpha$ as required. Observe now that the subspace $\mathscr{X}$ of $\mathscr{H}^2$ consisting of Dirichlet series of the form
	\[f(s) = \sum_{k=0}^\infty a_{2^k} 2^{-ks}\]
	is isometrically isometric to $H^2(\mathbb{D})$, through the map $2^{-s} \mapsto z$. In particular, since $\mathscr{C}_{\varphi_\alpha}$ maps $\mathscr{H}^2$ into $\mathscr{X}$, we find that
	\[\|\mathscr{C}_{\varphi_\alpha} f\|_{\mathscr{H}^2} = \|f \circ \varphi_\alpha \|_\mathscr{X} = \|f \circ \mathcal{T}_\alpha\|_{H^2(\mathbb{D})} = \|f\|_{H^2_{\operatorname{i}}(\mathbb{C}_{1/2},\,\alpha)},\]
	so equality in \eqref{eq:confnorm} is attained for $\varphi_\alpha$.
\end{proof}

We are now ready for the second part of the proof of Theorem~\ref{thm:bform}, which relies on an idea from \cite{Brevig17}.

\begin{lem} \label{lem:residue}
	For $0<\alpha<\infty$, let $C_\alpha$ denote the optimal constant in the embedding
	\[\|f\|_{H^2_{\operatorname{i}}(\mathbb{C}_{1/2},\,\alpha)} \leq C_\alpha \|f\|_{\mathscr{H}^2}.\]
	Then $C_\alpha = \sqrt{\|B_\alpha\|}$ where $B_\alpha$ is the bilinear form \eqref{eq:bform}.
\end{lem}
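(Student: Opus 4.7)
The plan is to express the left-hand norm as a Hermitian form in the Dirichlet coefficients of $f$, recognize that this form is exactly the one induced by the kernel $K_\alpha$, and then read off both inequalities in $C_\alpha = \sqrt{\|B_\alpha\|}$ from the definition of $\|B_\alpha\|$. Concretely, I would begin by taking a Dirichlet polynomial $f(s) = \sum_{n \leq N} a_n n^{-s}$, writing $|f(1/2+it)|^2 = \sum_{m,n} a_m \overline{a_n} (mn)^{-1/2} (n/m)^{it}$, and interchanging the finite sum with the integral in the explicit formula \eqref{eq:cnorm} applied at $\theta = 1/2$. This gives
\[
\|f\|_{H^2_{\operatorname{i}}(\mathbb{C}_{1/2},\,\alpha)}^2 = \sum_{m,n} a_m \overline{a_n}\, (mn)^{-1/2}\cdot \frac{\alpha}{\pi}\int_{-\infty}^\infty e^{it\log(n/m)}\,\frac{dt}{\alpha^2+t^2}.
\]

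The integral on the right is the Fourier transform of the Poisson/Cauchy kernel, justifying the name of the lemma: a one-line residue computation (closing the contour in the upper or lower half-plane according to the sign of $\log(n/m)$, picking up the pole at $t = \pm i\alpha$) gives the value $e^{-\alpha|\log(n/m)|} = (\min(m,n)/\max(m,n))^\alpha$. Using $\min(m,n)\max(m,n) = mn$, the weight simplifies to
\[
(mn)^{-1/2}\left(\frac{\min(m,n)}{\max(m,n)}\right)^\alpha = \frac{(mn)^{\alpha - 1/2}}{[\max(m,n)]^{2\alpha}} = K_\alpha(m,n),
\]
so $\|f\|_{H^2_{\operatorname{i}}(\mathbb{C}_{1/2},\,\alpha)}^2 = \sum_{m,n} a_m \overline{a_n} K_\alpha(m,n)$ whenever $f$ is a Dirichlet polynomial.

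From here both inequalities follow quickly. For the upper bound, the Hermitian form on the right is dominated by $\|B_\alpha\|\, \|a\|_{\ell^2}^2 = \|B_\alpha\|\,\|f\|_{\mathscr{H}^2}^2$ (since $K_\alpha \geq 0$, passing to absolute values reduces the Hermitian form to a real bilinear form on $\ell^2 \times \ell^2$), so $C_\alpha \leq \sqrt{\|B_\alpha\|}$ on the dense subspace of Dirichlet polynomials, and hence on all of $\mathscr{H}^2$ by a standard density/Fatou argument using the integral representation of the $H^2_{\operatorname{i}}$ norm. For the matching lower bound, I would fix $\varepsilon>0$, choose a non-negative $a \in \ell^2$ with $\|a\|_{\ell^2}=1$ and $B_\alpha(a,a) \geq \|B_\alpha\| - \varepsilon$ (positivity and symmetry of $K_\alpha$ let the sup be taken over such sequences, and the quadratic form equals the operator norm there), truncate $a$ to its first $N$ coordinates to obtain a Dirichlet polynomial $f_N$, and let $N \to \infty$; the identity derived above then yields $\|f_N\|_{H^2_{\operatorname{i}}}^2/\|f_N\|_{\mathscr{H}^2}^2 \to B_\alpha(a,a) \geq \|B_\alpha\|-\varepsilon$, forcing $C_\alpha \geq \sqrt{\|B_\alpha\|}$.

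The only genuinely delicate point is the residue/Fourier evaluation, but this is standard. The remaining care is bookkeeping: justifying the interchange of the finite sum and the integral (trivial for polynomials), and extending the identity-or-inequality from polynomials to all of $\mathscr{H}^2$. The latter is underpinned by the equivalence of the $H^2_{\operatorname{i}}(\mathbb{C}_{1/2},\,\alpha)$ norms for varying $\alpha$ together with \cite[Thm.~4.11]{HLS97}, exactly as used at the end of the proof of Lemma~\ref{lem:confnorm}, so it presents no new difficulty.
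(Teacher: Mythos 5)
Your proposal is correct and follows essentially the same route as the paper: evaluate the Poisson/Cauchy integral to get $I_\alpha(n/m)=[\max(n/m,m/n)]^{-\alpha}$, thereby identifying $\|f\|_{H^2_{\operatorname{i}}(\mathbb{C}_{1/2},\,\alpha)}^2$ with the quadratic form of the kernel $K_\alpha$, and then use the self-adjointness and positivity of the matrix to equate the optimal constant in the quadratic form with $\|B_\alpha\|$. Your treatment of the sharpness direction (nonnegative near-extremal sequences, truncation, density) is just a more explicit version of the paper's appeal to $b=\overline{a}$ for a real symmetric matrix.
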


\begin{proof}
	Let $x>0$. As in \cite{Brevig17}, we begin by computing the integral
	\[I_\alpha(x) := \frac{\alpha}{\pi} \int_{-\infty}^\infty x^{it}\,\frac{dt}{\alpha^2+t^2} = \frac{1}{[\max(x,1/x)]^\alpha}.\]
	We insert a Dirichlet series $f(s) = \sum_{n\geq1} a_n n^{-s}$ in \eqref{eq:cnorm} with $\theta=1/2$ and compute
	\[\|f\|_{H^2_{\operatorname{i}}(\mathbb{C}_{1/2},\,\alpha)}^2 = \sum_{m=1}^\infty \sum_{n=1}^\infty \frac{a_m \overline{a_n}}{\sqrt{mn}}\,I_\alpha(n/m) =\sum_{m=1}^\infty \sum_{n=1}^\infty a_m \overline{a_n} \frac{(mn)^{\alpha-1/2}}{[\max(m,n)]^{2\alpha}}.\]
	Since the matrix associated to the bilinear form \eqref{eq:bform} is real and symmetric, it is self-adjoint. This means that the norm is attained by considering only $b = \overline{a}$. Hence we have obtained the sharp estimate
	\[\|f\|_{H^2_{\operatorname{i}}(\mathbb{C}_{1/2},\,\alpha)}^2 \leq \|B_{\alpha}\| \|f\|_{\mathscr{H}^2}^2,\]
	as desired.
\end{proof}

\begin{proof}[Final part in the proof of Theorem~\ref{thm:bform}]
	Fix $w \in \mathbb{C}_{1/2}$. The norm of $\mathscr{H}^2$ is invariant under vertical translations, so we get from Lemma~\ref{lem:confnorm} that
	\[\sup_{\substack{\varphi \in \mathscr{G} \\ \varphi(+\infty)=w}} \|\mathscr{C}_\varphi\| = \sup_{\substack{\varphi \in \mathscr{G} \\ \varphi(+\infty)=\mre(w)}} \|\mathscr{C}_\varphi\| = \sup_{\|f\|_{\mathscr{H}^2}=1} \|f\|_{H^2_{\operatorname{i}}(\mathbb{C}_{1/2},\,\alpha)}\]
	with $\alpha = \mre(w)-1/2$. We complete the proof by using Lemma~\ref{lem:residue}.
\end{proof}

% THE CONTINUOUS VERSION
\section{Continuous bilinear forms and Riemann sums} \label{sec:riemann}
As explained in the introduction, we will initiate our study of \eqref{eq:bform} by investigating the continuous version. Let $K_\alpha$ be as in \eqref{eq:Ka} and consider
\[H_\alpha(f,g) := \int_0^\infty \int_0^\infty K_\alpha(x,y) f(x)g(y)\,dydx.\]
We have the following result about the norm of $H_\alpha$ on $L^2(0,\infty)$.
\begin{thm} \label{thm:Ha}
	Let $\alpha>0$. Then $\|H_\alpha\|=2/\alpha$, that is the sharp estimate
	\[|H_\alpha(f,g)|\leq \frac{2}{\alpha}\|f\|_{L^2}\|g\|_{L^2}\]
	holds for every pair of functions $f,g \in L^2(0,\infty)$.
\end{thm}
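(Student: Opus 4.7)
The plan is to treat $H_\alpha$ as a classical Hilbert--type bilinear form whose positive, symmetric kernel is homogeneous of degree $-1$, in which case the sharp constant should equal the one--variable integral
\[M_\alpha := \int_0^\infty K_\alpha(1,t)\,t^{-1/2}\,dt.\]
A first step is to compute $M_\alpha$: since $K_\alpha(1,t)\,t^{-1/2} = t^{\alpha-1}/[\max(1,t)]^{2\alpha}$, splitting the integral at $t=1$ reduces it to $\int_0^1 t^{\alpha-1}\,dt + \int_1^\infty t^{-\alpha-1}\,dt = 2/\alpha$.

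For the upper bound I would use the substitution $y = tx$ together with the homogeneity identity $K_\alpha(x,tx) = x^{-1}K_\alpha(1,t)$, which rewrites the bilinear form as
\[H_\alpha(f,g) = \int_0^\infty K_\alpha(1,t)\left(\int_0^\infty f(x)\,g(tx)\,dx\right)dt.\]
Applying the Cauchy--Schwarz inequality to the inner integral, together with the scaling identity $\|g(t\,\cdot\,)\|_{L^2} = t^{-1/2}\|g\|_{L^2}$, yields
\[|H_\alpha(f,g)| \leq \|f\|_{L^2}\|g\|_{L^2}\int_0^\infty K_\alpha(1,t)\,t^{-1/2}\,dt = \frac{2}{\alpha}\|f\|_{L^2}\|g\|_{L^2}.\]

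To see that $2/\alpha$ cannot be reduced, I would test with the standard near--extremals $f_\varepsilon(x) = g_\varepsilon(x) = x^{-1/2-\varepsilon}\chi_{[1,\infty)}(x)$ and let $\varepsilon \to 0^+$. One has $\|f_\varepsilon\|_{L^2}^2 = 1/(2\varepsilon)$, while splitting the double integral for $H_\alpha(f_\varepsilon,f_\varepsilon)$ along the diagonal $\{x=y\}$, exploiting symmetry, and evaluating the inner integral explicitly should give
\[H_\alpha(f_\varepsilon,f_\varepsilon) = \frac{2}{\alpha-\varepsilon}\left(\frac{1}{2\varepsilon} - \frac{1}{\alpha+\varepsilon}\right),\]
so that the ratio $H_\alpha(f_\varepsilon,f_\varepsilon)/\|f_\varepsilon\|_{L^2}^2 \to 2/\alpha$.

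The main source of friction, such as it is, will be bookkeeping in the sharpness calculation; the upper bound argument itself is entirely routine. It is worth emphasising that no $L^2$ function attains $2/\alpha$, which anticipates why the discrete analogue $\|B_\alpha\|$ in Theorem~\ref{thm:Balpha} is considerably more delicate and why a phase transition is possible there.
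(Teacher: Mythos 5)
Your proof is correct and follows essentially the same route as the paper: both arguments reduce the upper bound via homogeneity and Cauchy--Schwarz to the integral $\int_0^\infty K_\alpha(1,t)\,t^{-1/2}\,dt = 2/\alpha$ (the paper uses the Schur test with weights $\sqrt{x/y}$ and $\sqrt{y/x}$, you average over dilations, but these are interchangeable versions of the classical Hilbert-inequality argument), and both establish sharpness with the same test functions $x^{-1/2-\varepsilon}\chi_{[1,\infty)}(x)$ as $\varepsilon\to 0^+$. Your exact evaluation $H_\alpha(f_\varepsilon,f_\varepsilon) = \frac{2}{\alpha-\varepsilon}\left(\frac{1}{2\varepsilon}-\frac{1}{\alpha+\varepsilon}\right)$ checks out and is in fact slightly cleaner than the paper's $O(1)$ bookkeeping.
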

\begin{proof}
	Following \cite[Ch.~IX]{HLP}, we apply the Cauchy--Schwarz inequality with weights $\sqrt{x/y}$ and $\sqrt{y/x}$ to find that
	\begin{align*}
		|H_\alpha(f,g)| &\leq \left(\int_0^\infty |f(x)|^2 \sqrt{x} \int_0^\infty K_\alpha(x,y)\,\frac{dy}{\sqrt{y}}\,dx\right)^{1/2}\qquad\qquad\qquad\qquad\quad \\
		&\qquad\qquad\qquad\qquad\quad \times\left(\int_0^\infty |g(y)|^2 \sqrt{y} \int_0^\infty K_\alpha(x,y)\,\frac{dx}{\sqrt{x}}\,dy\right)^{1/2}.
	\end{align*}
	We then use a substitution and the homogeneity property to conclude that an upper bound for $\|H_\alpha\|$ is
	\[\sqrt{x}\int_0^\infty K_\alpha(x,y) \frac{dy}{\sqrt{y}} = \sqrt{x} \int_0^\infty K_\alpha(x,xy) \frac{x \,dy}{\sqrt{xy}} = \int_0^\infty K_\alpha(1,y)\,\frac{dy}{\sqrt{y}}=:C_\alpha.\]
	We then easily compute
	\begin{equation} \label{eq:Rint}
		C_\alpha = \int_0^1 y^{\alpha-1}\, dy + \int_1^\infty y^{-\alpha-1}\,dy = \frac{1}{\alpha} + \frac{1}{\alpha} = \frac{2}{\alpha}.
	\end{equation}
	To prove optimality, let $0<\varepsilon<\alpha$ and set 
	\[f(t) = g(t) = \begin{cases}
		0, & t \in(0,1), \\
		t^{-1/2-\varepsilon}, & t\in(1,\infty).
	\end{cases}\]
	A direct computation gives that
	\[H_\alpha(f,g) = \int_1^\infty \int_1^\infty \frac{(xy)^{\alpha-1-\varepsilon}}{[\max(x,y)]^{2\alpha}}\,dydx =\left(\frac{1}{\alpha-\varepsilon}+\frac{1}{\alpha+\varepsilon}\right)\|f\|_{L^2}^2+ O(1).\]
%	\begin{align*}
%		H_\alpha(f,g) &= \int_1^\infty \int_1^\infty \frac{(xy)^{\alpha-1-\varepsilon}}{[\max(x,y)]^{2\alpha}}\,dydx \\
%		&= \int_1^\infty x^{\alpha-1-\varepsilon}\left(\int_1^y \frac{y^{\alpha-1-\varepsilon}}{x^{2\alpha}}\,dy+\int_y^\infty y^{-\alpha-1-\varepsilon}\,dy\right)\,dx \\
%		&= \int_1^\infty x^{\alpha-1-\varepsilon} \left(\frac{1}{x^{2\alpha}}\frac{x^{\alpha-\varepsilon}-1}{\alpha-\varepsilon}+\frac{x^{-\alpha-\varepsilon}}{\alpha+\varepsilon}\right)\,dx \\
%		&= \left(\frac{1}{\alpha-\varepsilon}+\frac{1}{\alpha+\varepsilon}\right)\int_1^\infty x^{-1-2\varepsilon}\,dx + O(1) \\
%		&= \left(\frac{1}{\alpha-\varepsilon}+\frac{1}{\alpha+\varepsilon}\right)\|f\|_{L^2}^2+ O(1).
%	\end{align*}
	Clearly $\|f\|_{L^2} \to \infty$ as $\varepsilon\to 0^+$, so we find that
	\[\|H_\alpha\| \geq \lim_{\varepsilon\to 0^+}  \left(\frac{1}{\alpha-\varepsilon}+\frac{1}{\alpha+\varepsilon}\right) = \frac{2}{\alpha}. \qedhere\]
\end{proof}
We proceed by showing that the lower bound from the continuous setting carries across to $\|B_\alpha\|$.
\begin{lem} \label{lem:2alower}
	For $0<\alpha<\infty$, we have that $\|B_\alpha\|\geq 2/\alpha$.
\end{lem}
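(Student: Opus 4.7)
The plan is to discretize the optimality argument from the proof of Theorem~\ref{thm:Ha}. There the extremal family was the restriction of $t^{-1/2-\varepsilon}$ to $(1,\infty)$ for $0<\varepsilon<\alpha$, so I would test $B_\alpha$ on the sequence $a_n=b_n=n^{-1/2-\varepsilon}$ for $n\geq 1$. Then $\|a\|_{\ell^2}^2=\zeta(1+2\varepsilon)$, which plays the same role as $\|f\|_{L^2}^2=1/(2\varepsilon)$ in the continuous proof and, crucially, blows up as $\varepsilon\to 0^+$.

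Next, I would evaluate $B_\alpha(a,a)=\sum_{m,n\geq 1}(mn)^{\alpha-1-\varepsilon}[\max(m,n)]^{-2\alpha}$ by mimicking the computation \eqref{eq:Rint}. Summing first in $m$ gives
\[\sum_{m=1}^\infty \frac{m^{\alpha-1-\varepsilon}}{[\max(m,n)]^{2\alpha}}=n^{-2\alpha}\sum_{m=1}^n m^{\alpha-1-\varepsilon}+\sum_{m=n+1}^\infty m^{-\alpha-1-\varepsilon},\]
and standard integral comparison yields, for every $n\geq 1$,
\[\sum_{m=1}^n m^{\alpha-1-\varepsilon}=\frac{n^{\alpha-\varepsilon}}{\alpha-\varepsilon}+O\bigl(n^{\alpha-1-\varepsilon}+1\bigr),\qquad \sum_{m>n}m^{-\alpha-1-\varepsilon}=\frac{n^{-\alpha-\varepsilon}}{\alpha+\varepsilon}+O\bigl(n^{-\alpha-1-\varepsilon}\bigr).\]
After multiplying by $n^{\alpha-1-\varepsilon}$ and summing over $n$, the two main contributions combine to $[1/(\alpha-\varepsilon)+1/(\alpha+\varepsilon)]\zeta(1+2\varepsilon)$, while every error term is absolutely summable and bounded uniformly for small $\varepsilon>0$; one arrives at
\[B_\alpha(a,a)=\left(\frac{1}{\alpha-\varepsilon}+\frac{1}{\alpha+\varepsilon}\right)\zeta(1+2\varepsilon)+O(1),\]
in complete analogy with the identity appearing at the end of the proof of Theorem~\ref{thm:Ha}.

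Dividing by $\|a\|_{\ell^2}^2=\zeta(1+2\varepsilon)$ and letting $\varepsilon\to 0^+$ sends the $O(1)$ remainder to zero and leaves $1/(\alpha-\varepsilon)+1/(\alpha+\varepsilon)\to 2/\alpha$, proving $\|B_\alpha\|\geq 2/\alpha$. The only step needing any book-keeping is the integral comparison in the regime $\alpha<1+\varepsilon$, where the summand $m^{\alpha-1-\varepsilon}$ is non-increasing and a separate handling of the first term is required; but since the main term is of order $1/\varepsilon$ as $\varepsilon\to 0^+$, a crude $O(1)$ correction always suffices, so this is not a genuine obstacle.
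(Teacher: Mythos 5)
Your proposal is correct and follows essentially the same route as the paper: the same test sequence $a_n=b_n=n^{-1/2-\varepsilon}$, the same split of the double sum at $\max(m,n)$ with integral comparison giving the main term $\bigl(\tfrac{1}{\alpha-\varepsilon}+\tfrac{1}{\alpha+\varepsilon}\bigr)\zeta(1+2\varepsilon)+O(1)$, and the same limit $\varepsilon\to0^+$. No issues.
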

\begin{proof}
	Set $a_m = m^{-1/2-\varepsilon}$ and $b_n = n^{-1/2-\varepsilon}$ for some $0<\varepsilon<\alpha$. We get
	\begin{equation} \label{eq:Ba2a}
		\|B_\alpha\| \geq \frac{1}{\zeta(1+2\varepsilon)} \sum_{m=1}^\infty \sum_{n=1}^\infty \frac{(mn)^{\alpha-1-\varepsilon}}{[\max(m,n)]^{2\alpha}} =: \frac{D_\alpha}{\zeta(1+2\varepsilon)}.
	\end{equation}
	Now, 
	\[D_\alpha = \sum_{m=1}^\infty m^{-\alpha-1-\varepsilon} \sum_{n=1}^m n^{\alpha-1-\varepsilon} + m^{\alpha-1-\varepsilon} \sum_{n=m+1}^\infty n^{-\alpha-1-\varepsilon}.\]
	Standard computations shows that
	\begin{align*}
		\sum_{n=1}^m n^{\alpha-1-\varepsilon} &= \frac{n^{\alpha-\varepsilon}}{\alpha-\varepsilon} + O(n^{\alpha-1}), \\
		\sum_{n=m+1}^\infty n^{-\alpha-1-\varepsilon} &= \frac{n^{-\alpha-\varepsilon}}{\alpha+\varepsilon} + O(n^{-\alpha-1}),
	\end{align*}
	so we get that
	\[D_\alpha = \sum_{m=1}^\infty \sum_{n=1}^\infty \frac{(mn)^{\alpha-1-\varepsilon}}{[\max(m,n)]^{2\alpha}} = \zeta(1+2\varepsilon) \left(\frac{1}{\alpha-\varepsilon} + \frac{1}{\alpha+\varepsilon}\right) + O(1).\]
	We insert this into \eqref{eq:Ba2a} and let $\varepsilon\to 0^+$ to complete the proof.
\end{proof}
To investigate upper bounds for $\|B_\alpha\|$, we use the same weighted Cauchy--Schwarz inequality as in the proof of Theorem~\ref{thm:Ha} to get 
\begin{align*}
	|B_\alpha(a,b)| &\leq \left(\sum_{m=1}^\infty |a_m|^2 \sqrt{m}\sum_{n=1}^\infty \frac{(mn)^{\alpha-1/2}}{[\max(m,n)]^{2\alpha}}\,\sqrt{\frac{1}{n}}\right)^{1/2} \qquad\qquad\qquad\qquad \\
	&\qquad\qquad\qquad\qquad \times \left(\sum_{n=1}^\infty |b_n|^2 \sqrt{n}\sum_{m=1}^\infty \frac{(mn)^{\alpha-1/2}}{[\max(m,n)]^{2\alpha}}\,\sqrt{\frac{1}{m}}\right)^{1/2}.
\end{align*}
By symmetry, we obtain the upper bound $\|B_\alpha\| \leq \sup_{m} S_\alpha(m)$, where 
\begin{equation} \label{eq:Sxi}
	S_\alpha(m) := \sqrt{m} \sum_{n=1}^\infty \frac{(mn)^{\alpha-1/2}}{[\max(m,n)]^{2\alpha}}\,\sqrt{\frac{1}{n}}.
\end{equation}
Our next goal is to prove the following result, which by the preceding discussion constitutes the upper bound in Theorem~\ref{thm:Balpha}.
\begin{lem} \label{lem:Sasup}
	Let $0<\alpha<\infty$ and let $S_\alpha$ be as in \eqref{eq:Sxi}. Then
	\[\sup_{m} S_\alpha(m) = \max\left(\frac{2}{\alpha}\,,\,\zeta(1+\alpha)\right).\]
\end{lem}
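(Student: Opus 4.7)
My plan is first to put $S_\alpha(m)$ into a workable form by splitting the sum at $n=m$:
\[
S_\alpha(m) = m^{-\alpha}\sum_{n=1}^m n^{\alpha-1} + m^\alpha\sum_{n=m+1}^\infty n^{-\alpha-1}.
\]
The two endpoint values of $S_\alpha(m)$ then drop out. At $m=1$ the first sum is $1$ and the second is $\zeta(1+\alpha)-1$, so $S_\alpha(1) = \zeta(1+\alpha)$. Rewriting the two pieces as $\tfrac{1}{m}\sum_{n=1}^m(n/m)^{\alpha-1}$ and $\tfrac{1}{m}\sum_{n=m+1}^\infty (n/m)^{-\alpha-1}$ identifies them as Riemann sums for $\int_0^1 y^{\alpha-1}\,dy$ and $\int_1^\infty y^{-\alpha-1}\,dy$, both equal to $1/\alpha$ as in \eqref{eq:Rint}, so $\lim_{m\to\infty} S_\alpha(m) = 2/\alpha$. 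Together these give $\sup_m S_\alpha(m) \geq \max(\zeta(1+\alpha), 2/\alpha)$; all the work is in the reverse inequality.

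For $0<\alpha\leq 1$, the function $n^{\alpha-1}$ is nonincreasing, so the elementary integral bounds $\sum_{n=1}^m n^{\alpha-1}\leq 1+\int_1^m x^{\alpha-1}\,dx$ and $\sum_{n=m+1}^\infty n^{-\alpha-1}\leq \int_m^\infty x^{-\alpha-1}\,dx$ combine, after simplification, to $S_\alpha(m)\leq (1-1/\alpha)m^{-\alpha}+2/\alpha$. Since $1-1/\alpha\leq 0$ in this range, the desired bound $S_\alpha(m)\leq 2/\alpha$ follows. The case $\alpha\geq 3$ should likewise be approachable directly: the goal here is $S_\alpha(m)\leq \zeta(1+\alpha)$, attained at $m=1$. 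The bound $m^\alpha\sum_{n=m+1}^\infty n^{-\alpha-1}\leq 1/\alpha$ is still available; for the first sum I would peel off the top term $n=m$ (contributing $1/m$ after division by $m^\alpha$) and control the remainder $\sum_{n=1}^{m-1} n^{\alpha-1}$ by $\int_0^{m-1} x^{\alpha-1}\,dx$. For $\alpha\geq 3$, the resulting $m$-dependent bound stays comfortably below $\zeta(1+\alpha)$ for every $m\geq 2$, since $\zeta(1+\alpha)\geq \zeta(4)>1.08$ leaves room to absorb the excess.

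The main obstacle is the intermediate range $1<\alpha<3$, which by the structure of the paper is to be handled in Section~\ref{sec:lemma}. Two difficulties compound here. First, $n^{\alpha-1}$ is increasing, so the crude integral comparison has the wrong sign and produces an overshoot of order $1/m$ on top of $2/\alpha$. Second, the target on the right-hand side switches from $2/\alpha$ to $\zeta(1+\alpha)$ across $\alpha=\alpha_0$, so any quantitative estimate must be tight at that transition and must be sharp enough to interpolate between the two regimes. I expect the required input to be a sharper, second-order sum-to-integral estimate (Euler--Maclaurin being the natural tool) applied to $\sum_{n=1}^m n^{\alpha-1}$ and to $\sum_{n=m+1}^\infty n^{-\alpha-1}$, together with direct verification for the small values of $m$ where Riemann-sum approximations are weakest and where most of the variation of $S_\alpha(m)$ lives.
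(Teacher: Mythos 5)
Your setup (splitting at $n=m$, identifying the two pieces as right-endpoint Riemann sums of $\int_0^1 y^{\alpha-1}\,dy$ and $\int_1^\infty y^{-\alpha-1}\,dy$, and reading off $S_\alpha(1)=\zeta(1+\alpha)$ and $\lim_m S_\alpha(m)=2/\alpha$, hence the inequality $\sup_m S_\alpha(m)\geq\max(2/\alpha,\zeta(1+\alpha))$) matches the paper, and your argument for $0<\alpha\leq1$ is correct and essentially the paper's. But there is a genuine gap: for $1<\alpha<3$ you prove nothing — you only name Euler--Maclaurin as the likely tool and predict that small $m$ will need separate treatment. That range is the entire substance of the lemma (and of the paper): the claim that the bound $2/\alpha$ survives past the phase change at $\alpha=1$ up to $\alpha_0=1.48\ldots$, and that $\zeta(1+\alpha)$ takes over afterwards, lives exactly there. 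The delicate point you do not address is that the Euler--Maclaurin bounds must be tight enough that the two comparisons $S_\alpha(m)\leq 2/\alpha$ and $S_\alpha(m)\leq\zeta(1+\alpha)$ (for $m\geq2$) have validity ranges in $\alpha$ that actually overlap near the crossover; the paper needs third- and fifth-order remainder terms with controlled signs (its Lemma~\ref{lem:euler}) and a verification that the zero $\alpha_1=1.553\ldots$ of one auxiliary function exceeds the zero $\alpha_2=1.507\ldots$ of the other (Lemma~\ref{lem:auxeq}). Without carrying this out, the lemma is not proved.

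A secondary error: in your sketch for $\alpha\geq3$ the integral comparison points the wrong way. For the \emph{increasing} function $x^{\alpha-1}$ one has $\sum_{n=1}^{m-1}n^{\alpha-1}\geq\int_0^{m-1}x^{\alpha-1}\,dx$, not $\leq$ (already at $m=2$: $1\not\leq 1/\alpha$). The valid one-sided bound is $\sum_{n=1}^{m-1}n^{\alpha-1}\leq\int_1^m x^{\alpha-1}\,dx$, which yields $S_\alpha(m)\leq 2/\alpha+1/m-(\alpha m^\alpha)^{-1}$; at $m=2$, $\alpha=3$ this equals $1.125>\zeta(4)=1.082\ldots$, so even the corrected estimate does not close the case $m=2$. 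The paper avoids this by using that the first Riemann sum is decreasing in $m$ for $\alpha>1$ and bounding it by its exact value $2^{-\alpha}+1/2$ at $m=2$, which does suffice for $\alpha\geq3$.
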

The first step towards the proof of Lemma~\ref{lem:Sasup} is to rewrite \eqref{eq:Sxi} as  
\begin{equation} \label{eq:riemannsum}
	\begin{split}
		S_\alpha(m) &= m^{-\alpha}\sum_{n=1}^m n^{\alpha-1} + m^\alpha \sum_{n=m+1}^\infty n^{-\alpha-1}\\
		&= \frac{1}{m}\sum_{n=1}^m \left(0+\frac{n}{m}\right)^{\alpha-1} + \sum_{j=1}^\infty \frac{1}{m}\sum_{n=1}^m \left(j+\frac{n}{m}\right)^{-\alpha-1},
	\end{split}
\end{equation}
to see that $S_\alpha(m)$ is a Riemann sum of the two integrals in \eqref{eq:Rint} with step length $m^{-1}$, taking the value at the upper endpoint for each interval. Hence we conclude that
\begin{equation} \label{eq:limit}
	\lim_{m \to \infty } S_\alpha(m) = \frac{2}{\alpha}.
\end{equation}
Note also that $S_\alpha(1)=\zeta(1+\alpha)$. Hence Lemma~\ref{lem:Sasup} states that the first or the ``last'' element of the sequence $S_\alpha$ is always the biggest. The fact that $S_\alpha(m)$ are Riemann sums of \eqref{eq:Rint} directly gives the following simple proof.

\begin{proof}[Proof of Lemma~\ref{lem:Sasup}: $0<\alpha\leq 1$]
	If $0<\alpha\leq1$, then $y \mapsto y^{\alpha-1}$ and $y \mapsto y^{-\alpha-1}$ are decreasing functions on $(0,1)$ and $(1,\infty)$, respectively. This means that both sums in \eqref{eq:riemannsum} are increasing sequences, and the limit \eqref{eq:limit} is also the supremum of the combined sequence. 
\end{proof}

Note that when $1<\alpha<\infty$, the function $y\mapsto y^{\alpha-1}$ is increasing on $(0,1)$ so we can no longer take the limit and obtain the supremum. This is the phase change mentioned in the introduction. Nevertheless, when $\alpha$ is large enough, we may conclude by rather savage estimates.

\begin{proof}[Proof of Lemma~\ref{lem:Sasup}: $3 \leq \alpha < \infty$]
	Consider the sums in \eqref{eq:riemannsum}. We know that if $\alpha>1$, then the first sum is decreasing and the second sum is increasing. In particular, if $m\geq2$, then
	\[S_\alpha(m) \leq 2^{-\alpha}\left(1+2^{\alpha-1}\right) + \frac{1}{\alpha} =  2^{-\alpha} + {1/2} + \frac{1}{\alpha}.\]
	Furthermore, $S_\alpha(1) = \zeta(1+\alpha) \geq 1 + 2^{-\alpha-1}$, so we get that $S_\alpha(1)\geq S_\alpha(m)$ whenever
	\[1 + 2^{-\alpha-1} \geq 2^{-\alpha} + {1/2} + \frac{1}{\alpha} \qquad \Longleftrightarrow \qquad {1/2}-\frac{1}{\alpha}-2^{-\alpha-1} \geq0.\]
	This final expression is clearly increasing in $\alpha$ and positive for $\alpha = 3$, so we conclude that $\sup_m S_\alpha(m) = \zeta(1+\alpha)$ when $\alpha\geq3$.
\end{proof}

% PROOF OF LEMMA
\section{Proof of Lemma~\ref{lem:Sasup}: $1 \leq \alpha \leq 3$} \label{sec:lemma}
For fixed $\alpha>1$, we do not know if the sequence $S_\alpha(m)$ is increasing or decreasing, since it is the sum of one increasing and one decreasing sequence. Our general approach is therefore to obtain decreasing upper bounds for $S_\alpha(m)$ when $m\geq2$, which we then compare with $2/\alpha$ and $\zeta(1+\alpha)$.
  
To obtain these estimates, we will apply the Euler--Maclaurin summation formula (see~\cite[Ch.~B]{MV07}). In preparation, let us recall a few properties of Bernoulli polynomials, denoted $B_k(x)$. We will only have use of the first five polynomials, which are
\[B_1(x) = x-\frac{1}{2},\qquad B_2(x) = x^2-x+\frac{1}{6},\qquad B_3(x) = x^3 - \frac{3}{2}x^2+\frac{1}{2}x,\]
\[B_4(x) = x^4 -2x^3+x^2-\frac{1}{30},\qquad B_5(x) = x^5 - \frac{5}{2}x^4+\frac{5}{3}x^3-\frac{1}{6}x.\]
To analyse the remainder terms in the Euler--Maclaurin summation formula, we make use of the following simple result.
\begin{lem} \label{lem:symmetry}
	If $g$ is positive, continuous and decreasing on $[0,1]$, then
	\[\operatorname{sign}\left(\int_0^1 g(x)B_{2k+1}(x) \,dx\right) = (-1)^{k-1}.\]
\end{lem}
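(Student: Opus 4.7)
The plan is to reduce the integral to the half-interval $(0,1/2)$ using the reflection identity $B_{2k+1}(1-x) = -B_{2k+1}(x)$, which holds for all odd-index Bernoulli polynomials. Splitting at $x=1/2$ and substituting $y = 1-x$ in the upper half, one obtains
\[
\int_0^1 g(x)\,B_{2k+1}(x)\,dx \;=\; \int_0^{1/2}\bigl(g(x)-g(1-x)\bigr)B_{2k+1}(x)\,dx.
\]
Since $g$ is positive and decreasing on $[0,1]$, the factor $g(x)-g(1-x)$ is nonnegative on $(0,1/2)$ (and strictly positive whenever $g$ is not constant, which is the only case where the claimed sign is meaningful). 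Consequently, the sign of the integral coincides with the sign of $B_{2k+1}$ on $(0,1/2)$, \emph{provided} this sign is constant.

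The second step is then to verify that $B_{2k+1}$ has no sign change on $(0,1/2)$ and that its sign there is $(-1)^{k-1}$. For $k=0$ this is immediate from $B_1(x)=x-1/2$. For $k\geq 1$, one invokes the classical fact that the zeros of $B_{2k+1}$ in $[0,1]$ are exactly $0,\,1/2,\,1$; this follows from $B_{2k+1}'(x) = (2k+1)B_{2k}(x)$, the known pair of zeros of $B_{2k}$ in $(0,1)$ (symmetric about $1/2$), and Rolle's theorem, used inductively. The sign on $(0,1/2)$ is then determined by the one-sided derivative $B_{2k+1}'(0) = (2k+1)B_{2k}$ at the origin, and the alternating sign $(-1)^{k-1}$ of the nonzero Bernoulli numbers $B_{2k}$ supplies the desired $(-1)^{k-1}$. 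For the applications in Section~\ref{sec:lemma}, only $k\in\{0,1,2\}$ are needed, and the polynomials listed just before the lemma allow a direct inspection: $B_3(x)=x(x-\tfrac{1}{2})(x-1)$ is positive on $(0,1/2)$, while $B_5$ is negative there, as seen from $B_5'(0) = 5B_4 = -1/6 < 0$ together with the absence of interior zeros on $[0,1/2]$.

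The main obstacle I anticipate is the zero-counting step: excluding any hidden sign change of $B_{2k+1}$ on $(0,1/2)$. For $k\leq 2$ this is disposed of by inspection of the explicit polynomials, but in general it relies on the classical enumeration of zeros of Bernoulli polynomials. Once that is in hand, combining the reflection identity with the sign of $B_{2k+1}'(0)$ produces the stated conclusion with essentially no further work.
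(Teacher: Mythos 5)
Your proof is correct and follows essentially the same route as the paper: fold the integral about $x=1/2$ via the antisymmetry $B_{2k+1}(1-x)=-B_{2k+1}(x)$, use that $g(x)-g(1-x)\geq 0$ on $(0,1/2)$ for decreasing $g$, and invoke the constant sign $(-1)^{k-1}$ of $B_{2k+1}$ on $(0,1/2)$. The only difference is that the paper cites both Bernoulli-polynomial facts directly from \cite[Thm.~B.1]{MV07} and leaves the folding as a ``symmetry consideration,'' whereas you additionally supply the zero-counting argument for the sign fact and note the degenerate case of constant $g$.
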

\begin{proof}
	The following well-known facts can be found in \cite[Thm.~B.1]{MV07}:
	\begin{align*}
		B_{2k+1}(x) &= -B_{2k+1}(1-x)  & & (0<x<1),  \\
		\operatorname{sign}\big(B_{2k+1}(x)\big) &= (-1)^{k-1} & & (0<x<1/2).
	\end{align*}
	The statement now follows from a symmetry consideration.
\end{proof}
The Bernoulli numbers are defined by $B_k := B_k(0)$. We will only use that $B_2=1/6$ and $B_4=-1/30$. Let $\{x\}$ denote the fractional part of $x$. Suppose that $f \in C^{2k+1}\big([m,\infty)\big)$. For $m\geq1$ and $k\geq0$ we have that
\begin{equation} \label{eq:euler1}
	\sum_{n=m+1}^\infty f(n) = \int_m^\infty f(x)\,dx - \frac{f(m)}{2} - \sum_{j=1}^k \frac{B_{2j}}{(2j)!}f^{(2j-1)}(m) + R_k(m).
\end{equation}
The remainder term in \eqref{eq:euler1} is given by 
\[R_k(m) = \frac{1}{(2k+1)!}\int_m^\infty f^{(2k+1)}(x) B_{2k+1}(\{x\})\,dx.\]
Similarly to \eqref{eq:euler1}, if $f \in C^{(2k+1)}\big([1,m]\big)$ it holds for $m\geq1$ and $k\geq0$ that
\begin{equation} \label{eq:euler2}
	\begin{split}
		\sum_{n=1}^m f(n) &= \int_1^m f(x)\,dx + \frac{f(m)+f(1)}{2} \\ &+ \sum_{j=1}^k \frac{B_{2j}}{(2j)!}\left(f^{(2j-1)}(m)-f^{(2j-1)}(1)\right) + \widetilde{R}_k(m).
	\end{split}
\end{equation}
The remainder term in \eqref{eq:euler2} is given by 
\[\widetilde{R}_k(m) = \frac{1}{(2k+1)!}\int_1^m f^{(2k+1)}(x) B_{2k+1}(\{x\})\,dx.\]
We are now ready to obtain our estimates.

\begin{lem} \label{lem:euler}
	Let $0<\alpha<\infty$. Then
	\begin{align}
		m^\alpha \sum_{n=m+1}^\infty n^{-\alpha-1} &\leq \frac{1}{\alpha}-\frac{1}{2m} + \frac{(\alpha+1)}{12m^2}, \label{eq:tailupper} \\
		\zeta(1+\alpha) &\geq \frac{1}{\alpha} + \frac{1}{2} + \frac{(\alpha+1)}{12} - \frac{(\alpha+1)(\alpha+2)(\alpha+3)}{720}. \label{eq:zetalower}
		\intertext{If $1\leq \alpha \leq 2$, then}
		\frac{1}{m^\alpha} \sum_{n=1}^m n^{\alpha-1} &\leq \frac{1}{\alpha} + \frac{1}{2m} + \frac{(\alpha-1)}{12m^2} - \frac{(\alpha-3)(\alpha-4)}{12\alpha}\frac{1}{m^\alpha}, \label{eq:upper12}
		\intertext{and if $2\leq \alpha \leq 3$, then}
		\frac{1}{m^\alpha} \sum_{n=1}^m n^{\alpha-1} &\leq \frac{1}{\alpha} + \frac{1}{2m} + \frac{(\alpha-1)}{12m^2}. \label{eq:upper23}
	\end{align} 
\end{lem}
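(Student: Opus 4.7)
My plan is to apply the Euler--Maclaurin summation formulas \eqref{eq:euler1} and \eqref{eq:euler2} to the monomials $f(x)=x^{-\alpha-1}$ (for \eqref{eq:tailupper} and \eqref{eq:zetalower}) and $f(x)=x^{\alpha-1}$ (for \eqref{eq:upper12} and \eqref{eq:upper23}), and in each case to control the sign of the remainder $R_k$ or $\widetilde R_k$ using Lemma~\ref{lem:symmetry}. In each inequality, the closed-form part of the Euler--Maclaurin expansion will match the stated right-hand side exactly after a short algebraic manipulation, so the inequality reduces to determining the sign of the remainder.

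For \eqref{eq:tailupper}, I take \eqref{eq:euler1} with $k=1$: the closed-form terms produce exactly $m^{-\alpha}\bigl(\tfrac{1}{\alpha}-\tfrac{1}{2m}+\tfrac{\alpha+1}{12m^2}\bigr)$, so it suffices to show $R_1(m)\le 0$. The derivative $f^{(3)}(x)=-(\alpha+1)(\alpha+2)(\alpha+3)x^{-\alpha-4}$ has constant negative sign and $|f^{(3)}|$ is positive, continuous and decreasing on each $[j,j+1]$, so Lemma~\ref{lem:symmetry} with $k=1$ (sign $+1$) gives $\int_j^{j+1}B_3(\{x\})\,|f^{(3)}(x)|\,dx>0$, making the remainder integral negative. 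The same template yields \eqref{eq:zetalower}: apply \eqref{eq:euler1} with $k=2$ and $m=1$ to the same $f$, and add the $n=1$ term to recover $\zeta(1+\alpha)$; the closed-form is then exactly the stated lower bound, and the remainder $R_2(1)$ is positive because $f^{(5)}$ has constant sign with $|f^{(5)}|$ positive and decreasing, while Lemma~\ref{lem:symmetry} with $k=2$ flips the previous sign to $-1$.

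For the two bounds on $\tfrac{1}{m^\alpha}\sum_{n=1}^m n^{\alpha-1}$, I apply \eqref{eq:euler2} with $k=1$ and $f(x)=x^{\alpha-1}$. Dividing through by $m^\alpha$ and using the elementary identity
\[-\frac{1}{\alpha}+\frac{1}{2}-\frac{\alpha-1}{12}=-\frac{(\alpha-3)(\alpha-4)}{12\alpha}\]
to collect the $m^{-\alpha}$ contributions gives the clean expression
\[\frac{1}{m^\alpha}\sum_{n=1}^m n^{\alpha-1}=\frac{1}{\alpha}+\frac{1}{2m}+\frac{\alpha-1}{12m^2}-\frac{(\alpha-3)(\alpha-4)}{12\alpha\,m^\alpha}+\frac{\widetilde R_1(m)}{m^\alpha}.\]
In the range $2\le\alpha\le 3$ the coefficient $(\alpha-3)(\alpha-4)\ge 0$, and $f^{(3)}(x)=(\alpha-1)(\alpha-2)(\alpha-3)x^{\alpha-4}$ is non-positive with $|f^{(3)}|$ positive and decreasing, so Lemma~\ref{lem:symmetry} forces $\widetilde R_1(m)\le 0$; discarding both non-positive pieces delivers \eqref{eq:upper23}.

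The most delicate case is \eqref{eq:upper12}: in the range $1\le\alpha\le 2$ the sign of $(\alpha-1)(\alpha-2)(\alpha-3)$ reverses, so $\widetilde R_1(m)\ge 0$ and the $k=1$ analysis alone is insufficient. My proposal is to push to \eqref{eq:euler2} with $k=2$, where $f^{(5)}(x)=(\alpha-1)(\alpha-2)(\alpha-3)(\alpha-4)(\alpha-5)x^{\alpha-6}$ has sign pattern $(+)(-)(-)(-)(-)$, i.e.\ positive, and $|f^{(5)}|$ is positive and decreasing, so Lemma~\ref{lem:symmetry} with $k=2$ forces $\widetilde R_2(m)\le 0$. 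The new $B_4$-correction picked up in going from $k=1$ to $k=2$ equals $-\tfrac{(\alpha-1)(\alpha-2)(\alpha-3)}{720}(m^{\alpha-4}-1)$, which has favorable sign; the main obstacle will be to show that this correction together with $\widetilde R_2(m)/m^\alpha$ is dominated by the already-present $-\tfrac{(\alpha-3)(\alpha-4)}{12\alpha\,m^\alpha}$ term, a step which requires a careful pointwise comparison of the powers $m^{-\alpha}$ and $m^{-4}$ across the full range $1\le\alpha\le 2$, $m\ge 1$.
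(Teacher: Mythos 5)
Your treatment of \eqref{eq:tailupper}, \eqref{eq:zetalower} and \eqref{eq:upper23} is correct and coincides with the paper's own argument: in each case the Euler--Maclaurin expansion reproduces the stated right-hand side exactly, and Lemma~\ref{lem:symmetry} applied to $\pm f^{(2k+1)}$ gives the remainder the sign you claim. The genuine gap is in \eqref{eq:upper12}, precisely at the step you defer. First, your two statements about the $B_4$-correction contradict each other: for $1\le\alpha\le 2$ and $m\ge 1$ one has $(\alpha-1)(\alpha-2)(\alpha-3)\ge 0$ and $m^{\alpha-4}-1\le 0$, so
\[
-\frac{(\alpha-1)(\alpha-2)(\alpha-3)}{720}\left(m^{\alpha-4}-1\right)\;\ge\;0,
\]
i.e.\ the correction is \emph{unfavourable} and cannot be discarded alongside $\widetilde R_2(m)\le 0$. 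Second, the ``main obstacle'' you postpone cannot be overcome, because \eqref{eq:upper12} is false as stated. The $k=2$ identity is exact up to the remainder,
\[
\frac{1}{m^\alpha}\sum_{n=1}^m n^{\alpha-1}=\frac{1}{\alpha}+\frac{1}{2m}+\frac{\alpha-1}{12m^2}-\frac{(\alpha-1)(\alpha-2)(\alpha-3)}{720}\left(\frac{1}{m^{4}}-\frac{1}{m^{\alpha}}\right)-\frac{(\alpha-3)(\alpha-4)}{12\alpha}\frac{1}{m^\alpha}+\frac{\widetilde R_2(m)}{m^\alpha},
\]
so \eqref{eq:upper12} holds if and only if the nonnegative $B_4$-term is dominated by $-\widetilde R_2(m)/m^\alpha$, and it is not: at $\alpha=3/2$, $m=2$ the left-hand side of \eqref{eq:upper12} equals $(1+\sqrt2)\,2^{-3/2}=0.853553\ldots$, while the right-hand side equals $2/3+1/4+1/96-\tfrac{5}{24}2^{-3/2}=0.853426\ldots$.

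For what it is worth, the paper's proof commits the very error you half-commit: it writes the $B_4$-term with the factor $(\alpha-1)(\alpha-2)$ instead of $(\alpha-1)(\alpha-2)(\alpha-3)$ and declares it negative. The lemma and everything downstream survive if \eqref{eq:upper12} is weakened by keeping the $B_4$-term on the right-hand side; using $0\le m^{-\alpha}-m^{-4}\le m^{-\alpha}$ this adds at most $(\alpha-1)(\alpha-2)(\alpha-3)/(720\,m^{\alpha})\le 0.000535\,m^{-\alpha}$, a perturbation an order of magnitude smaller than the margins $h_1,h_2$ of Lemma~\ref{lem:auxeq}, so the proof of Lemma~\ref{lem:Sasup} for $1\le\alpha\le2$ goes through. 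As written, however, your plan for \eqref{eq:upper12} is both internally inconsistent on the sign of the correction and incomplete at the one step that cannot in fact be completed.
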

\begin{proof}
	To get \eqref{eq:tailupper} and \eqref{eq:zetalower} we apply \eqref{eq:euler1} to $f(x) = x^{-\alpha-1}$ with $k=1$ and $k=2$, respectively. Note that for \eqref{eq:zetalower} we take $m=1$. To verify the sign of the remainder term, we appeal to Lemma~\ref{lem:symmetry} for $g(x) = -f^{(2k+1)}(x)$. 
	
	To prove \eqref{eq:upper12}, we use \eqref{eq:euler2} with $f(x) = x^{\alpha-1}$ and $k=2$. To see that the remainder term is negative, we note that $g(x) = f^{(5)}(x)$ is positive and decreasing when $1<\alpha<2$ and use Lemma~\ref{lem:symmetry}. Hence we get that
	\begin{align*}
		m^{-\alpha} \sum_{n=1}^m n^{\alpha-1} &\leq \frac{1}{\alpha} + \frac{1}{2m} + \frac{(\alpha-1)}{12}\frac{1}{m^2} - \frac{(\alpha-1)(\alpha-2)}{720}\left(\frac{1}{m^4}-\frac{1}{m^\alpha}\right) \\
		&\qquad\qquad\qquad\qquad\qquad\qquad\qquad+ \left(-\frac{1}{\alpha}+\frac{1}{2}-\frac{(\alpha-1)}{12}\right)\frac{1}{m^\alpha} \\
		&\leq \frac{1}{\alpha} + \frac{1}{2m} + \frac{(\alpha-1)}{12}\frac{1}{m^2} - \frac{(\alpha-3)(\alpha-4)}{12\alpha}\frac{1}{m^\alpha},
	\end{align*}	
	where we in the second inequality used (twice) that $1\leq \alpha \leq 2$ to conclude that the fourth term is negative.
	
	Finally, for \eqref{eq:upper23}, we again use \eqref{eq:euler2} with $f(x) = x^{\alpha-1}$ and $k=1$. The remainder term is negative by Lemma~\ref{lem:symmetry}, since $g(x) = -f^{(3)}(x)$ is positive and decreasing when $2 < \alpha < 3$. Hence
	\begin{align*}
		\frac{1}{m^\alpha} \sum_{n=1}^m &\leq \frac{1}{\alpha} + \frac{1}{2m} + \frac{(\alpha-1)}{12}\frac{1}{m^2} + \left(-\frac{1}{\alpha}+\frac{1}{2} - \frac{(\alpha-1)}{12}\right)\frac{1}{m^\alpha}.
	\end{align*}
	The factor in front of $m^{-\alpha}$ is negative when $2< \alpha < 3$.
\end{proof}

\begin{proof}[Proof of Lemma~\ref{lem:Sasup}: $2\leq \alpha \leq 3$]
	We want to prove that $S_\alpha(1)\geq S_\alpha(m)$. We combine \eqref{eq:tailupper} and \eqref{eq:upper23} to get that if $m\geq2$, then
	\[S_\alpha(m) \leq \frac{2}{\alpha} + \frac{\alpha}{6m^2} \leq \frac{2}{\alpha} + \frac{\alpha}{24}.\]
	Recall that $S_\alpha(1)=\zeta(1+\alpha)$. By \eqref{eq:zetalower} and the fact that $2\leq \alpha \leq 3$, we get
	\[\zeta(1+\alpha) \geq \frac{1}{\alpha} + \frac{1}{2} + \frac{(\alpha+1)}{12} - \frac{(\alpha+1)(\alpha+2)(\alpha+3)}{720} \geq \frac{1}{\alpha} + \frac{2}{3}.\]
	We complete the proof by checking that 
	\[\frac{2}{\alpha} + \frac{\alpha}{24} < \frac{1}{\alpha} + \frac{2}{3}\]
	for $2\leq \alpha \leq 3$.	
\end{proof}

The proof of the following lemma is a straightforward calculus argument, but it is very tedious and therefore omitted.
\begin{lem} \label{lem:auxeq}
	For $1\leq \alpha \leq 2$, consider the following functions
	\begin{align}
		h_1(\alpha) &:= \frac{(\alpha-3)(\alpha-4)}{12\alpha}\frac{1}{2^\alpha}-\frac{\alpha}{24}, \label{eq:h1}\\
		h_2(\alpha) &:= \frac{1}{2} + \frac{(\alpha+1)}{12} - \frac{(\alpha+1)(\alpha+2)(\alpha+3)}{720} - \frac{1}{\alpha} + h_1(\alpha). \label{eq:h2}
	\end{align}
	Then 
	\begin{itemize}
		\item $h_1$ is strictly decreasing on $[1,2]$ and the equation $h_1(\alpha)=0$ has the unique solution $\alpha_1=1.553\ldots$,
		\item $h_2$ is strictly increasing on $[1,2]$ and the equation $h_2(\alpha)=0$ has the unique solution $\alpha_2=1.507\ldots$,
	\end{itemize}
	and in particular, $\alpha_1 > \alpha_2$.
\end{lem}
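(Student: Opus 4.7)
The plan is to treat the three assertions of Lemma~\ref{lem:auxeq} separately, reducing each to a derivative estimate plus intermediate-value considerations. The algebraic simplification $\frac{(\alpha-3)(\alpha-4)}{12\alpha} = \frac{\alpha}{12} - \frac{7}{12} + \frac{1}{\alpha}$, in which both the linear/constant piece and the $\frac{1}{\alpha}$ piece can be differentiated cleanly, is used throughout.

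For $h_1$, differentiation yields
\[
h_1'(\alpha) = \Bigl(\frac{1}{12} - \frac{1}{\alpha^2}\Bigr) 2^{-\alpha} - \Bigl(\frac{\alpha}{12} - \frac{7}{12} + \frac{1}{\alpha}\Bigr) 2^{-\alpha} \ln 2 - \frac{1}{24}.
\]
On $[1,2]$ the factor $\frac{1}{12} - \frac{1}{\alpha^2}$ lies in $[-\frac{11}{12}, -\frac{1}{6}]$, while $\frac{\alpha}{12}-\frac{7}{12}+\frac{1}{\alpha}$ decreases from $\frac{1}{2}$ at $\alpha = 1$ to $\frac{1}{12}$ at $\alpha = 2$ and stays positive. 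Both leading terms are therefore negative, giving $h_1'(\alpha) < -\frac{1}{24}$ uniformly. Combined with $h_1(1) = \frac{5}{24} > 0$ and $h_1(2) = -\frac{1}{16} < 0$, the intermediate value theorem isolates the unique root $\alpha_1$; bisection then pinpoints $\alpha_1 = 1.553\ldots$.

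For $h_2$, differentiation gives
\[
h_2'(\alpha) = \frac{1}{12} - \frac{3\alpha^2 + 12\alpha + 11}{720} + \frac{1}{\alpha^2} + h_1'(\alpha),
\]
and the crux of the proof is to show $h_2'(\alpha) > 0$ on $[1,2]$. A single uniform bound fails because $\frac{1}{\alpha^2}$ shrinks from $1$ to $\frac{1}{4}$ while $|h_1'(\alpha)|$ is smallest near $\alpha = 2$; e.g.\ using only worst-case values yields a negative estimate. I would therefore partition $[1,2]$ into five subintervals of length $\tfrac{1}{5}$ and, on each one, bound $\frac{1}{\alpha^2}$ from below by its value at the right endpoint and $-h_1'(\alpha)$ from above by its value at the left endpoint. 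The latter monotonicity comes from $h_1''(\alpha) > 0$ on $[1,2]$, which is immediate from the closed form: all three summands of $h_1''$ are manifestly positive on this range. A short numerical verification then confirms positivity on each subinterval. With strict monotonicity of $h_2$ in hand, the endpoint values $h_2(1) = -\frac{19}{120} < 0$ and $h_2(2) = \frac{5}{48} > 0$ deliver the unique root $\alpha_2$, and bisection yields $\alpha_2 = 1.507\ldots$.

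For the concluding inequality $\alpha_2 < \alpha_1$, I would exhibit an explicit rational between the two roots; $\alpha^\ast = \frac{153}{100}$ works. Direct evaluation gives $h_1(\alpha^\ast) > 0$ and $h_2(\alpha^\ast) > 0$, which by the already-established monotonicities force $\alpha^\ast < \alpha_1$ and $\alpha^\ast > \alpha_2$, respectively, hence $\alpha_2 < \alpha^\ast < \alpha_1$. The principal obstacle throughout is the subinterval analysis of $h_2'$: the competition between $\frac{1}{\alpha^2}$ and the $2^{-\alpha}$-weighted pieces hidden in $h_1'(\alpha)$ precludes a one-line estimate, which is precisely what makes the otherwise elementary argument tedious rather than conceptually subtle.
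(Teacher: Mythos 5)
Your proposal is correct: I checked the endpoint values $h_1(1)=\tfrac{5}{24}$, $h_1(2)=-\tfrac{1}{16}$, $h_2(1)=-\tfrac{19}{120}$, $h_2(2)=\tfrac{5}{48}$, the sign of $h_1'$ via the two negative leading terms, the positivity of $h_1''$, the five-subinterval lower bounds for $h_2'$ (all of which come out positive, the tightest being about $0.06$ on $[1,1.2]$), and the signs of $h_1$ and $h_2$ at $\alpha^\ast=\tfrac{153}{100}$, and all are right. The paper explicitly omits its proof as ``a straightforward calculus argument, but ... very tedious,'' and your argument is precisely that intended elementary calculus, so there is nothing further to compare.
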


\begin{proof}[Proof of Lemma~\ref{lem:Sasup}: $1\leq\alpha\leq2$]
	We combine \eqref{eq:tailupper} and \eqref{eq:upper12} to obtain
	\[S_\alpha(m) \leq \frac{2}{\alpha} + \frac{\alpha}{6}\frac{1}{m^2}- \frac{(\alpha-3)(\alpha-4)}{12\alpha}\frac{1}{m^\alpha}.\]
	We check that the right hand side is decreasing in $m\geq2$ for $1\leq\alpha\leq2$ to conclude that
	\begin{equation} \label{eq:Sam12}
		S_\alpha(m) \leq \frac{2}{\alpha} + \frac{\alpha}{24}-\frac{(\alpha-3)(\alpha-4)}{12\alpha} \frac{1}{2^\alpha}.
	\end{equation}
	We then use \eqref{eq:Sam12} and \eqref{eq:zetalower} to obtain
	\begin{align*}
		\frac{2}{\alpha} - S_\alpha(m) \geq h_1(\alpha), \\
		\zeta(1+\alpha) - S_\alpha(m) \geq h_2(\alpha),
	\end{align*}
	where $h_1$ and $h_2$ are the functions from \eqref{eq:h1} and \eqref{eq:h2}, respectively. By Lemma~\ref{lem:auxeq} we can therefore conclude that
	\[\max\left(\frac{2}{\alpha}\,,\,\zeta(1+\alpha)\right) \geq S_\alpha(m). \qedhere\]
\end{proof}

% REMARKS
\section{Concluding remarks} \label{sec:remarks}
\subsection{} Our first remarks concern the relationship between the upper bound for composition operators on $H^2(\mathbb{D})$ from \eqref{eq:upperbound}, Theorem~\ref{thm:bform} and Corollary~\ref{cor:Cphileq}. We first observe that the behaviour of the upper bound \eqref{eq:Cphileq} as $\varphi(+\infty)$ approaches the boundary of $\mathbb{C}_{1/2}$ is identical to the behaviour of the upper bound \eqref{eq:upperbound} as $\phi(0)$ approaches the boundary of $\mathbb{D}$, since $1+|\phi(0)|\to2$. 

Let us next discuss the transference principle from \cite{QS15}. Suppose that $\phi$ is the symbol of a composition operator on $H^2(\mathbb{D})$. The composition operator $\mathscr{C}_\phi\colon H^2(\mathbb{D})\to H^2(\mathbb{D})$ can be transferred to an operator on $\mathscr{H}^2$ with symbol
\begin{equation} \label{eq:transference}
	\varphi_\alpha := \mathcal{S}_{1/2} \circ \mathcal{T}_\alpha \circ \phi \circ \mathcal{I},
\end{equation}
where $\mathcal{T}_\alpha$ is as in \eqref{eq:Ta} and $\mathcal{I}(s) := 2^{-s}$. In the second part of the proof of Lemma~\ref{lem:confnorm}, we essentially transfer $\phi(z)=z$ in this way. Note that in \cite{QS15} only $\alpha=1$ is considered, but that particular choice of $\alpha$ is not important for their considerations. We can obtain the following result about the norm of the transferred composition operator.

\begin{thm} \label{thm:transference}
	Fix $0<\alpha<\infty$. Suppose that $\phi \colon \mathbb{D} \to \mathbb{D}$ is analytic, and set $r=|\phi(0)|<1$. Let
	\[\alpha_r := \alpha\frac{1-r}{1+r}.\]
	Define $\varphi_\alpha$ as in \eqref{eq:transference}. The upper bound $\|\mathscr{C}_{\varphi_\alpha}\|\leq \sqrt{\|B_{\alpha_r}\|}$ is sharp.
\end{thm}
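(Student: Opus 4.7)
The plan is to deduce the upper bound from Lemma~\ref{lem:confnorm} combined with Lemma~\ref{lem:residue} and an elementary monotonicity property of $\|B_\alpha\|$, and then to establish sharpness by exhibiting a single $\phi$ for which the transferred symbol $\varphi_\alpha$ coincides with the extremal symbol constructed at the end of the proof of Lemma~\ref{lem:confnorm}.

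For the upper bound, the first step is to note that $\varphi_\alpha \in \mathscr{G}$ with $c_0=0$ and
\[\varphi_\alpha(+\infty) = \tfrac{1}{2} + \mathcal{T}_\alpha(\phi(0)), \qquad \beta := \mre(\varphi_\alpha(+\infty)) - \tfrac{1}{2} = \alpha\, \frac{1-r^2}{|1+\phi(0)|^2}.\]
The triangle inequality $|1+\phi(0)| \leq 1+r$ immediately gives $\beta \geq \alpha_r$. Using the invariance of the $\mathscr{H}^2$-norm under vertical translations (to reduce to $c_1 \in \mathbb{R}$), Lemma~\ref{lem:confnorm} and Lemma~\ref{lem:residue} combine to give
\[\|\mathscr{C}_{\varphi_\alpha} f\|_{\mathscr{H}^2} \leq \|f\|_{H^2_{\operatorname{i}}(\mathbb{C}_{1/2},\, \beta)} \leq \sqrt{\|B_\beta\|}\, \|f\|_{\mathscr{H}^2}.\]
It then remains to show $\|B_\beta\| \leq \|B_{\alpha_r}\|$. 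Writing the kernel as $K_\alpha(m,n) = \min(m,n)^{\alpha-1/2}\max(m,n)^{-\alpha-1/2}$, one checks that $K_\alpha(m,n)$ is strictly decreasing in $\alpha$ for $m\neq n$ and constant on the diagonal; since $K_\alpha$ is non-negative, $\|B_\alpha\|$ is attained on non-negative sequences and is therefore non-increasing in $\alpha$. The bound $\beta \geq \alpha_r$ now yields $\|B_\beta\| \leq \|B_{\alpha_r}\|$.

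For sharpness, take the disc automorphism $\phi(z) := (z+r)/(1+rz)$. A short direct computation shows
\[\mathcal{T}_\alpha(\phi(z)) = \alpha\, \frac{(1-r)(1-z)}{(1+r)(1+z)} = \mathcal{T}_{\alpha_r}(z),\]
so $\varphi_\alpha(s) = 1/2 + \mathcal{T}_{\alpha_r}(2^{-s})$ is precisely the extremal symbol appearing in the second part of the proof of Lemma~\ref{lem:confnorm}, now taken at parameter $\alpha_r$. For that symbol equality in \eqref{eq:confnorm} holds simultaneously for every $f\in\mathscr{H}^2$, and Lemma~\ref{lem:residue} then gives $\|\mathscr{C}_{\varphi_\alpha}\| = \sqrt{\|B_{\alpha_r}\|}$, proving sharpness. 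The main (and essentially only non-routine) obstacle is the monotonicity of $\|B_\alpha\|$ in $\alpha$, which is not stated explicitly elsewhere in the paper but follows immediately from the pointwise behavior of $K_\alpha$ together with non-negativity of the kernel; all remaining steps are bookkeeping on the compositions $\mathcal{S}_{1/2}$, $\mathcal{T}_\alpha$ and $\mathcal{I}$.
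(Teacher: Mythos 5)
Your proposal is correct, but the upper bound is obtained by a genuinely different route from the paper's. The paper factors $\phi=\phi_r\circ(\phi_r^{-1}\circ\phi)$ with $\phi_r(z)=(r-z)/(1-rz)$, applies Littlewood's subordination principle \eqref{eq:upperbound} to the inner factor (which fixes $0$), and then uses the exact identity $\mathcal{T}_\alpha\circ\phi_r=\mathcal{T}_{\alpha_r}$ to land directly on $\|f\|_{H^2_{\operatorname{i}}(\mathbb{C}_{1/2},\,\alpha_r)}$, after which Lemma~\ref{lem:residue} finishes; no monotonicity of $\|B_\alpha\|$ is needed. You instead invoke Lemma~\ref{lem:confnorm} at the parameter $\beta=\mre\varphi_\alpha(+\infty)-1/2=\alpha(1-r^2)/|1+\phi(0)|^2$, observe $\beta\geq\alpha_r$, and then supply a new ingredient not stated in the paper: that $\alpha\mapsto\|B_\alpha\|$ is non-increasing, which you correctly derive from $K_\alpha(m,n)=(\min(m,n)/\max(m,n))^{\alpha-1/2}/\max(m,n)$ being pointwise non-increasing in $\alpha$ together with positivity of the kernel (so the norm is attained on non-negative sequences). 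Both arguments are valid; the paper's is more economical and self-contained, while yours has the minor bonus of producing the sharper bound $\sqrt{\|B_\beta\|}$ for individual $\phi$ whose value $\phi(0)$ is not a non-negative real (which is consistent with, and explains, why sharpness must be tested at $\phi(0)=r$). Your sharpness argument is essentially the paper's: both use the M\"obius extremizer with $\phi(0)=r$ and the identity $\mathcal{T}_\alpha\circ\phi=\mathcal{T}_{\alpha_r}$ (your choice $(z+r)/(1+rz)$ differs from the paper's $\phi_r$ only by the isometric rotation $z\mapsto -z$), reducing to the equality case of Lemma~\ref{lem:confnorm} and Lemma~\ref{lem:residue}.
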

\begin{proof}
	The proof is similar to that of Theorem~\ref{thm:bform}. We begin by noting that
	\[\|\mathscr{C}_{\varphi_\alpha}\|_{\mathscr{H}^2} = \|f \circ \mathcal{S}_{1/2} \circ \mathcal{T}_\alpha \circ \phi\|_{H^2(\mathbb{D})}.\]
	After a rotation, we may assume that $\phi(0)=r$. Let $\phi_r(z) := (r-z)/(1-rz)$. It now follows from \eqref{eq:upperbound} that
	\begin{equation} \label{eq:transest}
		\|f \circ \mathcal{S}_{1/2} \circ \mathcal{T}_\alpha \circ \phi_r \circ\phi_r^{-1} \circ \phi\|_{H^2(\mathbb{D})} \leq \|f\circ \mathcal{S}_{1/2} \circ \mathcal{T}_\alpha \circ \phi_r\|_{H^2(\mathbb{D})},
	\end{equation}
	since $(\phi_r^{-1}\circ\phi)(0)=0$. The proof is completed by computing $\mathcal{T}_\alpha \circ \phi_r = \mathcal{T}_{\alpha_r}$, then using \eqref{eq:cnorm} and Lemma~\ref{lem:residue}. Equality in \eqref{eq:transest} is attained for $\phi=\phi_r$.
\end{proof}
Note that when $0<\alpha\leq \alpha_0$, we can combine Theorem~\ref{thm:Balpha} and \eqref{eq:upperbound} to restate the sharp upper bound $\|\mathscr{C}_{\varphi_\alpha}\|\leq \sqrt{\|B_{\alpha_r}\|}$ as
\begin{equation} \label{eq:restateest}
	\|\mathscr {C}_{\varphi_\alpha}\|_{\mathscr{H}^2\to\mathscr{H}^2} \leq \sqrt{\|B_\alpha\|}\cdot \|C_\varphi\|_{H^2(\mathbb{D})\to H^2(\mathbb{D})}.
\end{equation}
The upper bound \eqref{eq:restateest} in fact holds for every $\alpha>0$. This can be deduced from the arguments in \cite[Sec.~9]{QS15} and Theorem~\ref{thm:bform}. However, \eqref{eq:restateest} cannot be sharp for every $\alpha>\alpha_0$ unless $r=0$. To see this, consider $\phi_r$ for some fixed $0<r<1$, then appeal to Theorem~\ref{thm:Balpha} and choose a large $\alpha$ such that
\[\frac{\zeta(1+\alpha_r)}{\zeta(1+2\alpha)}<\frac{1+r}{1-r}.\]

\subsection{} Let us now discuss Theorem~\ref{thm:Balpha} for $\alpha>\alpha_0$. We began our analysis of $B_\alpha$ with the application of the Cauchy--Schwarz inequality to obtain \eqref{eq:Sxi}. To obtain an upper bound for $\|B_\alpha\|$, we computed the supremum of the sequence $S_\alpha(m)$. Note that $a_m = m^{-1/2-\varepsilon}$ and $b_n = n^{-1/2-\varepsilon}$ which gives the lower bound $2/\alpha$ in Lemma~\ref{lem:2alower} is chosen to attain equality in the weighted Cauchy--Schwarz inequality. We get that $S_\alpha(+\infty) =2/\alpha$ is also an upper bound when $\alpha\leq\alpha_0$, since the ``tail estimate'' dominates $S_\alpha(m)$ for all $m$. 

We cannot expect the upper bound $S_\alpha(1)=\zeta(1+\alpha)$ to be attained in the same way, since the supremum is attained in the first summand of \eqref{eq:Sxi}. Hence we conjecture that $\|B_\alpha\|\leq \zeta(1+\alpha)$ is not sharp for all $\alpha>\alpha_0$. 

Our main effort has been directed at the upper bound in Theorem~\ref{thm:Balpha}. It is easy to improve the lower bound coming from the point estimate \eqref{eq:peval}. We offer only the following example result in this direction. If $\alpha>1$, then 
\begin{equation} \label{eq:lowernew}
	\|B_\alpha\| \geq 2 - \frac{\zeta(2\alpha)}{\zeta(2\alpha-1)}.
\end{equation}
To prove \eqref{eq:lowernew}, set $a_m = m^{-\alpha+1/2}$ and $b_n = n^{-\alpha+1/2}$. The estimate follows at once from the computation
\[\sum_{m=1}^\infty \sum_{n=1}^\infty \frac{1}{[\max(m,n)]^{2\alpha}} = 2\zeta(2\alpha-1)-\zeta(2\alpha).\]
Comparing with \eqref{eq:lowernew}, we find that the upper bound $\|B_\alpha\|\leq 2/\alpha$ cannot hold for $\alpha\geq1.7$. Note also that the difference between the lower bounds from Theorem~\ref{thm:Balpha} and \eqref{eq:lowernew} is irrelevant for large $\alpha$, compared to $\zeta(1+\alpha)$. In combination, these observations lead to the following questions. 
\begin{enumerate}
	\item[(a)] For which $1.48\ldots = \alpha_0 \leq \alpha <1.7$ does the upper bound $\|B_\alpha\|\leq 2/\alpha$ cease to hold?
	\item[(b)] What is the asymptotic decay of $\|B_\alpha\|-1$ as $\alpha\to\infty$?
\end{enumerate}
For question (a), we suggest investigating the attractive special case $\alpha=3/2$, which can be formulated as follows. Let $a=(a_1,a_2,\ldots)$ be a non-negative sequence. Find the best constant $C$ in the inequality
\[\sum_{m=1}^\infty \sum_{n=1}^\infty a_m a_n \frac{mn}{[\max(m,n)]^3} \leq C \sum_{m=1}^\infty a_m^2.\]
We know from Theorem~\ref{thm:Balpha} that $1.33\ldots = 4/3 \leq C \leq \zeta(5/2)=1.34\ldots$.

To make question (b) precise, we deduce from Theorem~\ref{thm:Balpha} that there are positive constants $C_1$ and $C_2$ such that for $\alpha\geq2$, we have 
\[C_1 4^{-\alpha} \leq \|B_\alpha\|-1 \leq C_2 2^{-\alpha}.\]
Note that \eqref{eq:lowernew} only changes the constant $C_1$. It would be interesting to decide which (if either) of these bounds is of the correct order. In analogy with \eqref{eq:upperbound} and \eqref{eq:lowerbound}, one might conjecture that $4^{-\alpha}$ is correct.

% REFERENCES
\bibliographystyle{amsplain} 
\bibliography{l}
\end{document}